%
%
%
%
\documentclass{amsart}

\newtheorem{theorem}{Theorem}[section]
\newtheorem{lemma}[theorem]{Lemma}
\newtheorem{cor}[theorem]{Corollary}
\usepackage[dvipdfm]{graphicx,color}
\usepackage{algorithm}
\usepackage{algorithmic}
\usepackage{multirow,bigdelim}
\usepackage{amsmath}
\usepackage{bm}
\usepackage{amssymb}
\usepackage{amscd}
\usepackage{amsfonts} 

\theoremstyle{definition}
\newtheorem{definition}[theorem]{Definition}

\theoremstyle{remark}

\numberwithin{equation}{section}



\begin{document}

\title[Gerschgorin's theorem for generalized
    eigenvalue problems]{Gerschgorin's theorem for generalized
    eigenvalue problems in the Euclidean metric}
\author{Yuji Nakatsukasa}
\address{Department of Mathematics, University of California, Davis}
\curraddr{
   Department of Mathematics, One Shields Avenue, Davis, California, 95616
}
\email{ynakam@math.ucdavis.edu}


\subjclass[2000]{Primary 15A22, 15A42, 65F15}

\date{\today}


\keywords{Gerschgorin's theorem, generalized eigenvalue problems, Euclidean metric, forward error analysis}
\begin{abstract} 
We present Gerschgorin-type eigenvalue inclusion sets  applicable to generalized eigenvalue problems. 
Our sets are defined by circles in the complex plane in the standard Euclidean metric, and are easier to compute than known similar results. 
As one application we use our results  to provide a forward   error analysis for a computed eigenvalue of a  diagonalizable pencil.
\end{abstract}

\maketitle

%

\section{Introduction}
For a standard eigenvalue problem $A\bm{x}=\lambda\bm{x}$ where $A\in \mathbb{C}^{n\times n}$,  Gerschgorin's theorem \cite{gersch} defines  in the complex plane a union of $n$ disks that contains all the $n$ eigenvalues. Its simple exposition and applicability make it an extremely  useful tool in estimating eigenvalue bounds.  
It also plays an important role  in eigenvalue perturbation theory \cite{wilkinson:1965,stewart-sun:1990}.


The generalized eigenvalue problem $A\bm{x}=\lambda B\bm{x}$  where $A,B\in \mathbb{C}^{n\times n}$ also arises in many scientific applications. It should be useful to have available a similar simple theory to estimate the eigenvalues for this type of problems as well.

In fact, Stewart and Sun \cite{Stewart75,stewart-sun:1990} provide an eigenvalue inclusion set applicable to generalized eigenvalue problems. 
The set is the union of $n$ regions defined by
\begin{equation}\label{steg}
G_i(A,B)\equiv \left\{z \in \mathbb{C}:\chi(z, a_{i,i}/b_{i,i})\leq  \varrho_i\right\},
\end{equation}
where 
\begin{equation}\label{rostewart}
\varrho_i=\sqrt{\frac{\left(\sum_{j\not= i}|a_{i,j}|\right)^2+\left(\sum_{j\not= i}|b_{i,j}|\right)^2}{|a_{i,i}|^2+|b_{i,i}|^2}}.
\end{equation}
All the eigenvalues of the pencil $A-\lambda B$ lie in the union of $G_i(A,B)$, i.e., if $\lambda$ is an eigenvalue, then 
\[ 
\lambda \in  G(A,B) \equiv\bigcup_{i=1}^n G_i(A,B).  
\]
Note that $\lambda$ can be infinite. We briefly review the  definition of eigenvalues of a pencil  at the beginning of section \ref{ggsec}. 

 The region \eqref{steg} is defined in terms of the chordal metric $\chi$, defined by \cite[Ch.7.7]{Golubbook}
\begin{equation}\nonumber
\chi(x,y)=\frac{|x-y|}{\sqrt{1+|x|^2}\sqrt{1+|y|^2}}.
\end{equation}
 The justification of using the chordal metric instead of the more standard Euclidean metric is in the unifying treatment of finite and infinite eigenvalues  \cite{Stewart75,stewart-sun:1990}. The use of the chordal metric has thus become a common practice in perturbation analyses for generalized eigenvalue problems, and some recent   results \cite{xiao07,li02} are  presented in terms of this metric. 

 However, using the chordal metric makes the application of the theory  less intuitive and usually more complicated. In particular, interpreting the set $G$ in the Euclidean metric is a difficult task, as opposed to the the Gerschgorin set for standard eigenvalue problems, which is defined as a union of $n$ disks. 
Another caveat of using $G$ is that it is not clear whether the region $G$ will give a nontrivial estimate of the eigenvalues. Specifically, since any two points in the complex plane have distance smaller than 1 in the chordal metric, if there exists $i$ such that $\varrho_i\geq 1$, then  $G$ is the whole complex plane, providing no information. 
In view of \eqref{rostewart}, it follows that  $G$ is useful only when both $A$ and $B$ have small off-diagonal elements. 

Another Gerschgorin-type eigenvalue localization theory applicable to generalized eigenvalue problems appear in a recent paper \cite{gengervarga} by Kostic et al. Their inclusion set is defined by 
\begin{equation}
  \label{vargager}
K_i(A,B)\equiv \left\{z \in \mathbb{C}:|b_{i,i}z-a_{i,i}|\leq \sum_{j\not= i}|b_{i,j}z-a_{i,j}|\right\},
\end{equation}
and all the eigenvalues of the pencil $A-\lambda B$ exist in the union $K(A,B) \equiv\bigcup_{i=1}^n K_i(A,B)$. This set is defined in the Euclidean metric, and \eqref{vargager} shows that $K(A,B)$ is a compact set in the complex plane $\mathbb{C}$ if and only if $B$ is strictly diagonally dominant.
However, the set \eqref{vargager} is in general a complicated region, which makes its practical application difficult. 


The goal of this paper is to  present a different generalization of  Gerschgorin's theorem applicable to generalized eigenvalue problems, which solves the issues mentioned above. In brief, our eigenvalue inclusion sets  have the following properties:
\begin{itemize}
\item They involve only circles in the Euclidean complex plane,  using the same information as \eqref{steg} does. Therefore it is simple to compute and visualize. 
\item They are defined in the Euclidean metric, but still deal with finite and infinite eigenvalues uniformly.
\item One variant $\Gamma^S(A,B)$ is a union of $n$ disks when $B$ is strictly diagonally dominant. 
\item Comparison with $G(A,B)$: Our results are defined in the Euclidean metric. Tightness is incomparable, but our results are tighter when $B$ is close to a diagonal matrix.
\item Comparison with $K(A,B)$: Our results are defined by circles and are much simpler. $K(A,B)$ is always tighter, but our results approach $K(A,B)$ when $B$ is close to a diagonal matrix.
\end{itemize}
In summary, our results provide a method for estimating eigenvalues of $(A,B)$ in a much cheaper way than the two known results do. 

 The structure of the paper is  as follows. In section \ref{ggsec} we describe our idea and derive our main Gerschgorin theorems for generalized eigenvalue problems. Simple examples and plots are shown in section \ref{ex} to illustrate the properties of different regions.  
Section 4 presents one application of our results, where  we develop  forward error analyses for the computed eigenvalues of a non-Hermitian generalized eigenvalue problem. 
\section{Main  Gerschgorin theorems}\label{ggsec}
 In this section we develop our Gerschgorin theorem and its  variants. First the basic idea for bounding the  eigenvalue location is  discussed. Section \ref{gersec1} presents a  simple bound and derives our first Gerschgorin theorem. In section \ref{gersec2} we carry out a more careful analysis and obtain  a tighter result. In section \ref{gersec3} we show that our results can  localize a specific number of  eigenvalues, a well-known property of $G$ and the  Gerschgorin set for standard eigenvalue problems. 

As a brief summary of the eigenvalues of a pencil  $A-\lambda B$ where $A,B\in \mathbb{C}^{n\times n}$, 
$\lambda$ is a finite  eigenvalue of the pencil if $\mbox{det}(A-\lambda B)=0$, and in this case there exists nonzero $\bm{x}\in \mathbb{C}^{n}$ such that $A\bm{x}=\lambda B\bm{x}$. 
 If the degree of the characteristic polynomial $\mbox{det}(A-\lambda B)$ is $d<n$, then we say the pencil has $n-d$ infinite eigenvalues. In this case, there exists a nonzero vector $\bm{x}\in \mathbb{C}^{n}$ such that $ B\bm{x}=0$. 
When $B$ is nonsingular, the pencil has $n$ finite eigenvalues, matching those of $B^{-1}A$. 

Throughout the paper we assume that for each $i\in \{1,2,\cdots, n\}$, the $i$th row of either $A$ or $B$ is strictly diagonally dominant, unless otherwise mentioned. Although this may seem a rather restrictive assumption, its justification is the  observation that the set $G(A,B)$ is always the entire complex plane unless this assumption  is true.

 \subsection{Idea}\label{first}

Suppose $A\bm{x}=\lambda B\bm{x}$ (we consider the case $\lambda=\infty$ later).  We write $\bm{x}=(x_1,x_2,\cdots, x_n)^T$ and denote by $a_{p,q}$ and $b_{p,q}$the ($p,q$)th element of $A$ and $B$ respectively. Denote by $i$ the integer such that $|x_i|=\max_{1\leq j\leq n}|x_j|$, so that $x_i\not=0$. First  we consider the case where the  $i$th row of $B$ is strictly diagonally dominant, so   $ |b_{i,i}|>\sum_{j\not= i}|b_{i,j}|$.
From the  $i$th equation  of $A\bm{x}=\lambda B\bm{x}$ we have
\begin{align}\label{starthere}
a_{i,i}x_i+\sum_{j\not= i}a_{i,j}x_j=\lambda(b_{i,i}x_i+\sum_{j\not= i}b_{i,j}x_j).
\end{align}
Dividing both sides by $x_i$ and rearranging yields 
\[
\lambda\left( b_{i,i}+\sum_{j\not= i}b_{i,j}\frac{x_j}{x_i}\right)-a_{i,i}=\sum_{j\not= i}a_{i,j}\frac{x_j}{x_i}.
\]
\begin{equation}\label{kokokara}
\therefore \left|\lambda \left(b_{i,i}+\sum_{j\not= i}b_{i,j}\frac{x_j}{x_i}\right)-a_{i,i}\right|\leq \sum_{j\not= i}|a_{i,j}|\frac{|x_j|}{|x_i|}\leq R_i,
\end{equation}
where we write $ R_i=\sum_{j\not= i}|a_{i,j}|$. The last inequality holds because $|x_j|\leq |x_i|$ for all $j$.
 Here, using the assumption  $|b_{i,i}|>\sum_{j\not= i}|b_{i,j}|$, we have 
\begin{align*}
\left|b_{i,i}+\sum_{j\not= i}b_{i,j}\frac{x_j}{x_i}\right|&\geq |b_{i,i}|-\sum_{j\not= i}|b_{i,j}|\frac{|x_j|}{|x_i|}\\
&\geq |b_{i,i}|-\sum_{j\not= i}|b_{i,j}|>0,
\end{align*}
 where we  used   $|x_j|\leq |x_i|$ again.  Hence we can  divide \eqref{kokokara} by $\left|b_{i,i}+\sum_{j\not= i}b_{i,j}\frac{x_j}{x_i}\right|$, which yields
\begin{equation}\label{kok}
 \left|\lambda -\frac{a_{i,i}}{\left(b_{i,i}+\sum_{j\not= i}b_{i,j}\frac{x_j}{x_i}\right)}\right|\leq \frac{R_i}{\left|b_{i,i}+\sum_{j\not= i}b_{i,j}\frac{x_j}{x_i}\right|}.
\end{equation}
 Now, writing $\gamma_i=(\sum_{j\not= i}b_{i,j}\frac{x_j}{x_i})/b_{i,i}$, we have $|\gamma_i|\leq \sum_{j\not= i}|b_{i,j}|/|b_{i,i}|\  (\equiv r_i)< 1$, 
and \eqref{kok} becomes
\begin{equation}\label{kok2}
 \left|\lambda -\frac{a_{i,i}}{b_{i,i}}\cdot \frac{1}{1+\gamma_i}\right|\leq \frac{R_i}{|b_{i,i}|}\frac{1}{|1+\gamma_i|}.
\end{equation}
Our interpretation of  this inequality is as follows:   $\lambda$ lies in the disk  of radius $R_i/|b_{i,i}||1+\gamma_i|$  centered at  $a_{i,i}/b_{i,i}(1+\gamma_i)$, defined in the complex plane. Unfortunately the exact value of  $\gamma_i$ is unknown, so we cannot specify the disk. Fortunately, we show in section \ref{gersec1} that using  $|\gamma_i|\leq  r_i$ we can obtain a region that contains all the disks defined by  \eqref{kok2} for any $\gamma_i$ such that $|\gamma_i|\leq r_i$.  

 Before we go on to analyze the inequality \eqref{kok2}, let us consider the case where the $i$th row of $A$ is strictly diagonally dominant. As we will see, this  also lets us treat infinite eigenvalues. 


Recall  \eqref{starthere}.  We first note that  if  $|x_i|=\max_{j}|x_j|$ and  the $i$th row of $A$ is strictly diagonally dominant, then $\lambda\not= 0$, 
because $|a_{i,i}x_i+\sum_{j\not= i}a_{i,j}x_j|\geq |a_{i,i}||x_i|-\sum_{j\not= i}|a_{i,j}||x_j|\geq |x_i|(|a_{i,i}|-\sum_{j\not= i}|a_{i,j}|)>0$. Therefore, in place of  \eqref{starthere} we  start with the equation 
\begin{align*}
b_{i,i}x_i+\sum_{j\not= i}b_{i,j}x_j=\frac{1}{\lambda}\left(a_{i,i}x_i+\sum_{j\not= i}a_{i,j}x_j\right).
\end{align*}
 Note that this expression includes the case $\lambda=\infty$, because then the equation becomes $B\bm{x}=0$. 
Following the same analysis as above, we arrive at  the  inequality corresponding to  \eqref{kok2}:
\begin{equation}\label{kok2a}
 \left|\frac{1}{\lambda} -\frac{b_{i,i}}{a_{i,i}}\cdot \frac{1}{1+ \gamma_i^A}\right|\leq \frac{R_i^A}{|a_{i,i}|}\frac{1}{|1+\gamma^A_i|},
\end{equation}
where we write  $ R_i^A=\sum_{j\not= i}|b_{i,j}|$ and  $\gamma_i^A=(\sum_{j\not= i}a_{i,j}\frac{x_j}{x_i})/a_{i,i}$. Note that \\
  $|\gamma_i^A|\leq \sum_{j\not= i}|a_{i,j}|/|a_{i,i}|\  (\equiv r_i^A)< 1$. Therefore we are in an essentially  same situation as in  \eqref{kok2}, the only difference being  that we are bounding  $1/\lambda$ instead of $\lambda$. 

In summary, in both cases the problem  boils down to finding a region that contains all $z$ such that 
\begin{equation}\label{inz}
 \left|z - \frac{s}{1+\gamma}\right|\leq \frac{t}{|1+\gamma|},
\end{equation}
 where $s\in \mathbb{C},t>0$ are known and $0<r<1$ is known such that  $|\gamma|\leq r$. 
\subsection{Gerschgorin theorem}\label{gersec1}
First we bound the right-hand side of  \eqref{inz}. This can be done simply by 
\begin{equation}
  \label{radius}
\frac{t}{|1+\gamma|}\leq \frac{t}{1-|\gamma|}\leq\frac{t}{1-r}.  
\end{equation}

Next we consider a region that contains all the possible centers of the disk \eqref{kok2}.  We use the following result.
\begin{lemma}\label{rbound}
If $|\gamma|\leq r<1$, then the point $1/(1+\gamma)$ lies in the disk in the complex plane of  radius $r/(1-r)$ centered at $1$.
\end{lemma}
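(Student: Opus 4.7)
The plan is to reduce the claim to a direct computation using the triangle inequality. I would first subtract the proposed center from the point in question: write
\[
\frac{1}{1+\gamma}-1 = \frac{1-(1+\gamma)}{1+\gamma} = \frac{-\gamma}{1+\gamma},
\]
so the quantity to bound is $|\gamma|/|1+\gamma|$. The numerator is controlled directly by the hypothesis $|\gamma|\leq r$. For the denominator, I would invoke the reverse triangle inequality to obtain $|1+\gamma|\geq 1-|\gamma|\geq 1-r$, which is strictly positive because $r<1$ (so the fraction is well defined and the division preserves the inequality direction).

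Combining these two bounds gives
\[
\left|\frac{1}{1+\gamma}-1\right| = \frac{|\gamma|}{|1+\gamma|} \leq \frac{r}{1-r},
\]
which is precisely the assertion that $1/(1+\gamma)$ lies in the closed disk of radius $r/(1-r)$ centered at $1$.

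There is no real obstacle here; the only point that deserves a moment's care is ensuring that $|1+\gamma|>0$ so that $1/(1+\gamma)$ is defined and the monotonicity of reciprocation on positive reals applies, and this is guaranteed by the strict inequality $r<1$. One could additionally remark (though it is not needed for the stated lemma) that the bound is sharp, attained for $\gamma=-r$, which will be relevant when one asks whether the enclosing disk in the subsequent Gerschgorin-type theorem can be shrunk.
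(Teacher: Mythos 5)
Your proof is correct and follows essentially the same route as the paper's: compute $\frac{1}{1+\gamma}-1=\frac{-\gamma}{1+\gamma}$, then bound the numerator by $r$ and the denominator from below by $1-|\gamma|\geq 1-r$ via the reverse triangle inequality. The paper merely records the two estimates in two successive lines rather than combining them at once, so there is no substantive difference.
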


\begin{proof}
\begin{align*}
\left|\frac{1}{1+\gamma}-1\right|&=\left|\frac{\gamma}{1+\gamma}\right|\\
&\leq\frac{r}{1-|\gamma|}\\
& \leq \frac{r}{1-r}.
\end{align*}
\end{proof}
In view of  \eqref{inz}, this means that  $s/(1+\gamma)$,  the center of the disk \eqref{inz}, has to lie in the disk of  radius $sr/(1-r)$ centered at $\displaystyle s$. 
Combining this and \eqref{radius},  we conclude  that  $z$ that satisfies \eqref{inz} is included in the disk of radius $\displaystyle\frac{sr}{1-r}+\frac{t}{1-r}$, centered at $s$. 

Using this for \eqref{kok2} by letting $s=|a_{i,i}|/|b_{i,i}|, t=R_i/|b_{i,i}|$ and $r=r_i$,  we see that $\lambda$ that satisfies \eqref{kok2}  is necessarily included in the disk centered at $a_{i,i}/b_{i,i}$, and of radius 
\[\rho_i=\frac{|a_{i,i}|}{|b_{i,i}|}\frac{r_i}{1-r_i}+ \frac{R_i}{|b_{i,i}|}\frac{1}{1-r_i}= \frac{|a_{i,i}|r_i+R_i}{|b_{i,i}|(1-r_i)}.\]
 Similarly, applying the result to \eqref{kok2a}, we see that $1/\lambda$ satisfying  \eqref{kok2a} has to satisfy 
\begin{equation}\label{kok2a2}
 \left|\frac{1}{\lambda} -\frac{b_{i,i}}{a_{i,i}}\right|\leq \frac{|b_{i,i}|r^A_i+R_i^A}{|a_{i,i}|(1-r^A_i)}.
\end{equation}
This is equivalent to 
\[
 \left|a_{i,i}-\lambda b_{i,i}\right|\leq \frac{|b_{i,i}|r^A_i+R_i^A}{(1-r^A_i)}|\lambda|.
\]

If $b_{i,i}=0$, this becomes $\displaystyle  \frac{R_i^A}{(1-r^A_i)}|\lambda|\geq |a_{i,i}|$, which is $\displaystyle |\lambda|\geq  \frac{|a_{i,i}|}{R_i^A}(1-r^A_i)$ when $R_i^A\not=0$. If $b_{i,i}=R_i^A=0$, no finite $\lambda$ satisfies the inequality, so we say the point $\lambda=\infty$  includes  the inequality. 

If $b_{i,i}\not= 0$, we have
 \begin{equation}\label{apolo}
\left|\lambda -\frac{a_{i,i}}{b_{i,i}}\right|\leq  \frac{|b_{i,i}|r^A_i+R_i^A}{|b_{i,i}|(1-r^A_i)}\cdot |\lambda |.   
\end{equation}
 For simplicity, we write this inequality as 
 \begin{equation}
   \label{simple}
|\lambda-\alpha_i|\leq \beta_i |\lambda|,   
 \end{equation}
 where $\displaystyle \alpha_i=\frac{a_{i,i}}{b_{i,i}}$ and $\displaystyle\beta_i= \frac{|b_{i,i}|r^A_i+R_i^A}{|b_{i,i}|(1-r^A_i)}>0$. 
Notice that the equality of  \eqref{simple} holds on a certain  circle of Apollonius \cite[sec.2]{geometry}, defined by $|\lambda-\alpha_i|= \beta_i |\lambda|$. It is easy to see that the radius of the Apollonius circle is 
\[\rho^A_i=\left|\frac{1}{2}\left(\frac{|\alpha_i|}{1-\beta_i}-\frac{|\alpha_i|}{1+\beta_i}\right)\right|=\frac{|\alpha_i|\beta_i}{|1-\beta_i^2|},\]
 and the center is 
\[c_i=\frac{1}{2}\left(\frac{\alpha_i}{1+\beta_i}+\frac{\alpha_i}{1-\beta_i}\right)=\frac{\alpha_i}{1-\beta_i^2}.\]
From  \eqref{simple} we observe the following. The Apollonius circle divides the complex plane into two regions, and    $\lambda$ exists in the region that contains $\alpha_i=a_{i,i}/b_{i,i}$. 
Consequently,  $\lambda$ lies outside the circle of Apollonius when $\beta_i>1$, and inside it when $\beta_i<1$. When $\beta_i=1$, 
the Apollonius circle is  the perpendicular bisector of the line that connects $\alpha_i$ and $0$, dividing the  complex plane into halves. 

 The above arguments motivate the following definition. 
 \begin{definition}\label{def1}
For   $n\times n$ complex  matrices $A$ and $B$, denote by $S^B$ (and $S^A$) the set of $i\in \{1,2,\cdots, n\}$ such that the $i$th row of $B$ ($A$) is
 strictly diagonally dominant. 

For $i\in S^B$, define the disk $\Gamma_i^B(A,B)$ by
\begin{equation}\label{mygb}
\Gamma_i^B(A,B)\equiv\left\{z \in \mathbb{C}:\left|z-\frac{a_{i,i}}{b_{i,i}}\right|\leq \rho_i \right\} 
\quad (i=1, 2, \cdots, n),
\end{equation} 
where denoting $\displaystyle r_i =\sum_{j\not= i}\frac{|b_{i,j}|}{|b_{i,i}|}(<1)$ and $\displaystyle R_i=\sum_{j\not= i}|a_{i,j}|$, the radii $\rho_i $ are defined by
\[\rho_i = \frac{|a_{i,i}|r_i +R_i}{|b_{i,i}|(1-r_i )}.\]
For $i\notin S^B$, we set $\Gamma_i^B(A,B)=\mathbb{C}$, the whole complex plane. 

We also define $\Gamma_i^A(A,B)$ by the following.  For $i\in S^A$,  
denote $\displaystyle r_i^A=\sum_{j\not= i}\frac{|a_{i,j}|}{|a_{i,i}|}(<1)$ and $\displaystyle R_i^A=\sum_{j\not= i}|b_{i,j}|$. 

If $b_{i,i}=R^A_i=0$, define $\Gamma_i^A(A,B)=\{\infty\}$, the point $z=\infty$.
 If $b_{i,i}=0$ and $R^A_i>0$, define $\Gamma_i^A(A,B)\equiv\left\{z \in \mathbb{C}:|z|\geq  \frac{|a_{i,i}|}{R_i^A}(1-r^A_i)\right\}$. 

For $b_{i,i}\not= 0$, denoting $\displaystyle \alpha_i=\frac{a_{i,i}}{b_{i,i}}$ and $\displaystyle\beta_i= \frac{|b_{i,i}|r^A_i+R_i^A}{|b_{i,i}|(1-r^A_i)}$, \\
\begin{itemize}
\item If $\beta_i<1$, then define
\begin{equation}\label{myg2}
\Gamma_i^A(A,B)\equiv\left\{z \in \mathbb{C}:\left|z-c_i\right|\leq \rho_i^A\right\},
\end{equation} 
where $\displaystyle c_i=\frac{\alpha_i}{1-\beta_i^2}$ and $\displaystyle \rho_i^A=\frac{|\alpha_i|\beta_i}{|1-\beta_i^2|}$. 
\item If $\beta_i>1$, then define
\begin{equation}\label{myginf}
\Gamma_i^A(A,B)\equiv\left\{z \in \mathbb{C}:\left|z-c_i\right|\geq  \rho_i^A\right\},
\end{equation} 
\item If $\beta_i=1$, then define
\begin{equation}\label{myghalf}
\Gamma_i^A(A,B)\equiv\left\{z \in \mathbb{C}:|z-\alpha_i|\leq |z|\right\}. 
\end{equation} 

\end{itemize}


Finally for $i\notin S^A$, we set $\Gamma_i^A(A,B)=\mathbb{C}$. 
\end{definition}
\bigskip

Note that $\Gamma_i^A(A,B)$ in \eqref{myginf} and \eqref{myghalf} contains the point $\{\infty\}$. 

We now present our eigenvalue localization theorem.

\begin{theorem}[Gerschgorin-type theorem for generalized eigenvalue problems]\label{gger}
 Let $A,B$ be $n\times n$ complex  matrices. 

All the eigenvalues of the pencil $A-\lambda B$  lie in the  union of $n$ regions $\Gamma_i(A,B)$ in the complex plane defined by
\begin{equation}\label{myg}
\Gamma_i(A,B)\equiv  \Gamma_i^B(A,B)\cap \Gamma_i^A(A,B).
\end{equation} 
 In other words, if $\lambda$ is an eigenvalue of the pencil, then 
\[\lambda\in \Gamma(A,B)\equiv\bigcup_{1\leq i\leq n}\Gamma_i(A,B).\]
\end{theorem}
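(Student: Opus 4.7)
The plan is to carry out the derivation already sketched in Section~2.1 and formalize it as a case analysis on the dominant component of an eigenvector. Let $\lambda$ be an eigenvalue of the pencil with corresponding eigenvector $\bm{x} = (x_1,\ldots,x_n)^T$, and let $i$ be an index with $|x_i| = \max_{1\leq j\leq n}|x_j|$, so $x_i\neq 0$. By the standing assumption, for this particular $i$ we have $i\in S^B$ or $i\in S^A$ (possibly both). I will show that in the first case $\lambda\in\Gamma_i^B(A,B)$, and in the second case $\lambda\in\Gamma_i^A(A,B)$. Since $\Gamma_j^B = \mathbb{C}$ whenever $j\notin S^B$ and $\Gamma_j^A = \mathbb{C}$ whenever $j\notin S^A$, in either case $\lambda$ belongs to $\Gamma_i^B(A,B)\cap\Gamma_i^A(A,B) = \Gamma_i(A,B)\subseteq\Gamma(A,B)$, completing the proof.

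For the case $i\in S^B$, I would reproduce the manipulation from \eqref{starthere} through \eqref{kok2}: dividing the $i$th scalar equation $A\bm{x}=\lambda B\bm{x}$ by $x_i$, applying $|x_j/x_i|\leq 1$, and using the strict diagonal dominance of $B$'s $i$th row to justify the division by $|b_{i,i}+\sum_{j\ne i}b_{i,j}x_j/x_i|$. Then I would apply Lemma~\ref{rbound} with $r=r_i$ to bound the center $a_{i,i}/(b_{i,i}(1+\gamma_i))$ away from $a_{i,i}/b_{i,i}$ by at most $(|a_{i,i}|/|b_{i,i}|)\cdot r_i/(1-r_i)$, and the bound \eqref{radius} to bound the radius $R_i/|b_{i,i}||1+\gamma_i|$ by $R_i/(|b_{i,i}|(1-r_i))$. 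Adding these yields exactly $\rho_i$ as defined in \eqref{mygb}, so $\lambda\in\Gamma_i^B(A,B)$.

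For the case $i\in S^A$, I would first verify $\lambda\neq 0$ from strict diagonal dominance of $A$'s $i$th row, then rewrite the eigenvalue equation as $B\bm{x} = (1/\lambda)A\bm{x}$; here the case $\lambda = \infty$ is handled uniformly because the equation reduces to $B\bm{x}=0$. Repeating the same argument with the roles of $A$ and $B$ swapped leads to \eqref{kok2a} and then to \eqref{apolo}, i.e.\ $|\lambda-\alpha_i|\leq \beta_i|\lambda|$, with a separate argument when $b_{i,i}=0$ giving either $\{\infty\}$ (if $R_i^A=0$) or the exterior disk $\{|z|\geq |a_{i,i}|(1-r_i^A)/R_i^A\}$. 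For $b_{i,i}\neq 0$, I would invoke the standard description of the Apollonius locus $|\lambda-\alpha_i|=\beta_i|\lambda|$ as a circle (or a line when $\beta_i = 1$), compute its center $c_i = \alpha_i/(1-\beta_i^2)$ and radius $\rho_i^A = |\alpha_i|\beta_i/|1-\beta_i^2|$, and then observe that the inequality \eqref{simple} cuts out the side containing $\alpha_i$: this is the interior disk when $\beta_i<1$, the exterior (plus $\infty$) when $\beta_i>1$, and the half-plane $|z-\alpha_i|\leq|z|$ when $\beta_i=1$. Matching these three subcases to \eqref{myg2}, \eqref{myginf}, \eqref{myghalf} in Definition~\ref{def1} gives $\lambda\in\Gamma_i^A(A,B)$.

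The main obstacle I expect is bookkeeping in the $\Gamma_i^A$ case: ensuring the geometric description of the Apollonius set is correct on each side of $\beta_i=1$, that the point at infinity is included when and only when it should be (cases $\beta_i\geq 1$, and the degenerate $b_{i,i}=0$ sub-cases), and that the whole argument really does cover $\lambda=\infty$ through the reciprocal formulation. Once these subcases are cleanly verified, the theorem follows immediately by taking the union over $i$.
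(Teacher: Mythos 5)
Your proposal follows the paper's proof in broad outline (reduce to the Section~2.1 derivation by picking the index $i$ with $|x_i|=\max_j|x_j|$, then split on whether row $i$ of $A$ or of $B$ is strictly diagonally dominant), but there is a real gap in the treatment of $\lambda=\infty$, which is precisely where the paper's own proof does its work.

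The issue is in your case~(1): you assert that $i\in S^B$ implies $\lambda\in\Gamma_i^B(A,B)$, and plan to prove it by running the manipulation from \eqref{starthere} to \eqref{kok2}. That argument starts from $A\bm{x}=\lambda B\bm{x}$ and is only meaningful for finite $\lambda$. If $\lambda=\infty$ and the dominant index $i$ happened to lie in $S^B\setminus S^A$, the claim would actually be \emph{false}: $\Gamma_i^A(A,B)=\mathbb{C}$, so $\Gamma_i(A,B)=\Gamma_i^B(A,B)$ is a bounded disk not containing $\infty$. The theorem survives only because this situation never arises, and that is exactly the observation your sketch omits: if $B\bm{x}=0$ with $|x_i|=\max_j|x_j|$, then row $i$ of $B$ cannot be strictly diagonally dominant, since otherwise
\[
|b_{i,i}x_i+\textstyle\sum_{j\neq i}b_{i,j}x_j|\;\geq\;|x_i|\bigl(|b_{i,i}|-\textstyle\sum_{j\neq i}|b_{i,j}|\bigr)\;>\;0,
\]
contradicting $(B\bm{x})_i=0$. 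Hence for $\lambda=\infty$ one automatically has $i\notin S^B$ (so case~(1) is vacuous) and, by the standing assumption, $i\in S^A$. Moreover this same non-dominance gives $|b_{i,i}|\leq R_i^A$, which is what guarantees $\beta_i\geq 1$ and therefore $\infty\in\Gamma_i^A(A,B)$ via \eqref{myginf} or \eqref{myghalf}. You correctly flag ``the whole argument really does cover $\lambda=\infty$'' as the main obstacle, but this single diagonal-dominance observation is the resolution, and without stating it the proof of the theorem is incomplete. Once it is added, the rest of your plan goes through and matches the paper.
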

\begin{proof}
First consider the case where $\lambda$ is a  finite  eigenvalue, so that $A\bm{x}=\lambda B\bm{x}$. 
The above arguments show that $\lambda\in  \Gamma_i(A,B)$ for $i$ such that $|x_i|=\max_j |x_j|$. 

Similarly, in the infinite eigenvalue case $\lambda=\infty$,  
let $B\bm{x}=0$. Note that  the $i$th row (such that $|x_i|=\max_j |x_j|$) of $B$ cannot be strictly diagonally dominant, because if it is, then  $|b_{i,i}x_i+\sum_{j\not= i}b_{i,j}x_j|\geq |b_{i,i}||x_i|-\sum_{j\not= i}|b_{i,j}||x_j|\geq |x_i|(|b_{i,i}|-\sum_{j\not= i}|b_{i,j}|)>0$. Therefore, $\Gamma_i^B(A,B)=\mathbb{C}$, so $\Gamma_i(A,B)=\Gamma_i^A(A,B)$. Here if $i\notin S^A$, then $\Gamma_i(A,B)=\mathbb{C}$, so $\lambda \in \Gamma(A,B)$ is trivial. Therefore we consider the case $i\in S^A$. Note that the fact that $B$ is not strictly diagonally dominant implies $|b_{i,i}|<R_i^A$, which in turn means $\beta_i>1$, because recalling that $\displaystyle\beta_i= \frac{|b_{i,i}|r^A_i+R_i^A}{|b_{i,i}|(1-r^A_i)}$, we have
\[
|b_{i,i}|r^A_i+R_i^A-|b_{i,i}|(1-r^A_i)=|b_{i,i}|(2r^A_i-1)+R_i^A>2r^A_i|b_{i,i}|>0. 
\]
Hence, recalling \eqref{myginf} we see that ${\infty}\in \Gamma_i^A(A,B)$. 


Therefore,  any eigenvalue of the pencil  lies in  $\Gamma_i(A,B)$ for some $i$, so all the eigenvalues lie in the union $ \bigcup_{1\leq i\leq n}\Gamma_i(A,B)$. 
\end{proof}


 Theorem \ref{gger} shares the properties with the standard Gerschgorin theorem that it is an eigenvalue inclusion set that is easy to compute, and the boundaries are defined as circles (except for $\Gamma_i^A(A,B)$ for the special case $\beta_i=1$). One difference between the two is that Theorem \ref{gger} involves $n+m$ circles, where $m$  is the number of rows for which both $A$ and $B$ are strictly diagonally dominant. By contrast, the standard Gerschgorin always needs $n$ circles. Also, when $B\rightarrow I$, the set does not become the standard Gerschgorin set, but rather becomes a slightly tighter set (owing to $\Gamma_i^A(A,B)$). 
 Although these are not serious defects of out set $\Gamma(A,B)$, the following  simplified  variant  solves the two issues. 
 \begin{definition}
We use  the notations in Definition \ref{def1}. 
For $i\in S^B$, define $\Gamma_i^S(A,B)$ by $\Gamma_i^S(A,B)=\Gamma_i^B(A,B)$. For  $i\notin S^B$, define $\Gamma_i^S(A,B)=\Gamma_i^A(A,B)$.   
 \end{definition}
\begin{cor}\label{simp}
 Let $A,B$ be $n\times n$ complex  matrices. All the eigenvalues of the pencil $A-\lambda B$  lie in $\displaystyle\Gamma^S(A,B)=\bigcup_{1\leq i\leq n}\Gamma_i^S(A,B)$. 
\end{cor}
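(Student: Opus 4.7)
The plan is to deduce this corollary as a direct weakening of Theorem~\ref{gger}. The key observation is that, by its definition, $\Gamma_i^S(A,B)$ is always \emph{one of} $\Gamma_i^B(A,B)$ or $\Gamma_i^A(A,B)$, whereas $\Gamma_i(A,B)$ is their \emph{intersection}. In particular, for every $i$ we have
\[
\Gamma_i(A,B) \;=\; \Gamma_i^B(A,B)\cap \Gamma_i^A(A,B) \;\subseteq\; \Gamma_i^S(A,B),
\]
regardless of whether $i\in S^B$ or $i\notin S^B$.

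From this containment it follows at once that
\[
\Gamma(A,B) \;=\; \bigcup_{i=1}^n \Gamma_i(A,B) \;\subseteq\; \bigcup_{i=1}^n \Gamma_i^S(A,B) \;=\; \Gamma^S(A,B).
\]
Since Theorem~\ref{gger} guarantees that every eigenvalue of the pencil $A-\lambda B$ lies in $\Gamma(A,B)$, every such eigenvalue also lies in $\Gamma^S(A,B)$.

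The only subtle point worth flagging in the write-up is well-definedness: one must check that the case $i\notin S^B$ in the definition of $\Gamma_i^S(A,B)$ actually produces a meaningful (non-trivial) region. This is covered by the standing assumption stated at the beginning of Section~\ref{ggsec}, namely that for every $i$ at least one of the $i$th rows of $A$ or $B$ is strictly diagonally dominant; hence $i\notin S^B$ forces $i\in S^A$, and $\Gamma_i^A(A,B)$ is one of the genuine regions defined in \eqref{myg2}--\eqref{myghalf} rather than the default $\mathbb{C}$. With this remark in place, the proof reduces to the two-line containment argument above, so no real obstacle arises.
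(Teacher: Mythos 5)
Your proof is correct and takes essentially the same route as the paper: establish the pointwise containment $\Gamma_i(A,B)\subseteq\Gamma_i^S(A,B)$ for every $i$ and then invoke Theorem~\ref{gger}. The remark on well-definedness is fine but not needed for the containment argument --- even if $\Gamma_i^S(A,B)=\mathbb{C}$ for some $i$, the inclusion and hence the corollary hold trivially.
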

\begin{proof}
 It is easy to see that $\Gamma_i(A,B)\subseteq \Gamma_i^S(A,B)$ for all $i$. Using Theorem \ref{gger} the conclusion follows immediately. 
\end{proof}
As a special case, this result becomes a union of $n$ disks when $B$ is strictly diagonally dominant. 
\begin{cor}\label{simp2}
 Let $A,B$ be $n\times n$ complex  matrices, and let  $B$ be strictly diagonally dominant. 
 Then, $\Gamma_i^S(A,B)=\Gamma_i^B(A,B)$, and 
  denoting by $\lambda_1,\cdots,\lambda_n$ the $n$ finite eigenvalues of the pencil $A-\lambda B$, 
\[\lambda\in \Gamma^S(A,B)=\bigcup_{1\leq i\leq n}\Gamma_i^B(A,B).\]
\end{cor}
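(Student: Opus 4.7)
The plan is to observe that this is essentially a direct specialization of Corollary \ref{simp}, so the work amounts to checking two things: that $\Gamma_i^S(A,B)$ reduces to $\Gamma_i^B(A,B)$ for every $i$, and that the pencil actually has $n$ finite eigenvalues in this setting.

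First I would note that if $B$ is strictly diagonally dominant, then by definition every row of $B$ satisfies $|b_{i,i}| > \sum_{j\neq i}|b_{i,j}|$, so $S^B = \{1,2,\ldots,n\}$. Consulting the definition of $\Gamma_i^S(A,B)$, we see that for each $i \in S^B$ we have $\Gamma_i^S(A,B) = \Gamma_i^B(A,B)$, which gives the first assertion of the corollary immediately.

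Next I would verify that the pencil has $n$ finite eigenvalues. Strict diagonal dominance of $B$ implies $B$ is nonsingular (for instance, applying the classical Gerschgorin theorem to $B$, each disk is centered at $b_{i,i}$ with radius strictly less than $|b_{i,i}|$, so $0$ lies in none of them), and hence $\det(A - \lambda B)$ is a polynomial of degree exactly $n$ with $n$ finite roots $\lambda_1,\ldots,\lambda_n$.

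Finally I would apply Corollary \ref{simp}, which asserts $\lambda \in \Gamma^S(A,B) = \bigcup_i \Gamma_i^S(A,B)$ for every eigenvalue $\lambda$; combined with the identification $\Gamma_i^S(A,B) = \Gamma_i^B(A,B)$ from the first step, this yields $\lambda_k \in \bigcup_{1\leq i\leq n} \Gamma_i^B(A,B)$ for all $k$. There is no real obstacle here; the only mildly substantive step is the nonsingularity argument guaranteeing that all $n$ eigenvalues are finite, so that the statement about $\lambda_1,\ldots,\lambda_n$ is meaningful.
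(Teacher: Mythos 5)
Your proof is correct and follows essentially the same route as the paper's: identify $\Gamma_i^S(A,B)=\Gamma_i^B(A,B)$ from strict diagonal dominance of $B$, note that diagonal dominance forces $B$ nonsingular so all $n$ eigenvalues are finite, and then invoke Corollary \ref{simp}. You simply spell out a couple of steps (e.g., the Gerschgorin argument for nonsingularity) that the paper leaves implicit.
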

\begin{proof}
  The fact that $\Gamma_i^S(A,B)=\Gamma_i^B(A,B)$ follows immediately from the diagonal dominance of $B$. The diagonal dominance of $B$ also forces it to be nonsingular, so that the pencil $A-\lambda B$ has $n$ finite eigenvalues. 
\end{proof}

Several points are worth noting regarding the above results.
\begin{itemize}
\item $\Gamma^S(A,B)$ in Corollaries \ref{simp} and \ref{simp2} is defined by $n$ circles. Moreover, it is easy to see that  $\Gamma^S(A,B)$ reduces to the original Gerschgorin theorem by letting $B=I$. In this respect $\Gamma^S(A,B)$ might be considered a more natural generalization of the standard Gerschgorin theorem than $\Gamma(A,B)$. We note that these properties are shared by $K(A,B)$ in \eqref{vargager} but not shared by $G(A,B)$ in \eqref{steg}, which is defined by $n$ regions, but not circles in the Euclidean metric, and is not equivalent  to (always worse, see below) the standard Gerschgorin set when $B=I$. 
 $\Gamma(A,B)$ also shares with $K(A,B)$ the property that it is a compact set in $\mathbb{C}$ if and only if $B$ is strictly diagonally dominant, as mentioned in Theorem 8 in \cite{gengervarga}. 
\item $K(A,B)$ is always included in $\Gamma(A,B)$. 
To see this, suppose that $z\in K_i(A,B)$ so $|b_{i,i}z-a_{i,i}|\leq \sum_{j\not= i}|b_{i,j}z-a_{i,j}|$. Then for $b_{i,i}\not= 0$, (note that $\Gamma_i^B(A,B)=\mathbb{C}$ so trivially $z\in \Gamma_i^B(A,B)$ if $b_{i,i}=0$)
\begin{align*}
&|b_{i,i}z-a_{i,i}|\leq \sum_{j\not= i}|b_{i,j}z|+\sum_{j\not= i}|a_{i,j}|  \\
\Leftrightarrow &|z-\frac{a_{i,i}}{b_{i,i}}|-\sum_{j\not= i}\frac{|b_{i,j}z|}{|b_{i,i}|}\leq \frac{R_i}{|b_{i,i}|}\quad \left(\mbox{recall}\ R_i=\sum_{j\not= i}|a_{i,j}|\right)\\
\Rightarrow &|z-\frac{a_{i,i}}{b_{i,i}}|-r_i|z|\leq \frac{R_i}{|b_{i,i}|}. \quad \left(\mbox{recall}\  r_i=\sum_{j\not= i}\frac{|b_{i,j}|}{|b_{i,i}|}\right)
\end{align*}
Since we can write 
$|z-a_{i,i}|-r_i|z|=|z-a_{i,i}+ r_ie^{i\theta}z|$ for some $\theta\in [0,2\pi]$, it follows that if $z\in K_i(A,B)$ then 
\[\left|z(1+r_ie^{i\theta})-\frac{a_{i,i}}{b_{i,i}}\right|\leq \frac{R_i}{|b_{i,i}|}. \]
Since $r_i<1$, we can divide this by $(1+r_ie^{i\theta})$, which yields

\begin{equation}
  \label{zin}
\left|z-\frac{a_{i,i}}{b_{i,i}}\frac{1}{1+r_ie^{i\theta}}\right|\leq \frac{R_i}{|b_{i,i}|}\frac{1}{|1+r_ie^{i\theta}|}.  
\end{equation}
Note that this becomes \eqref{kok2} if we substitute  $\gamma_i$ into $r_ie^{i\theta}$ and $\lambda$ into $z$. 
Now, since $\Gamma_i^B(A,B)$ is derived from \eqref{kok2} by considering a disk that contains $\lambda$ that satisfies \eqref{kok2} for any $\gamma_i$ such that $|\gamma_i|<r_i$, it follows that $z$ that satisfies \eqref{zin} is included in  $\Gamma_i^B(A,B)$. By a similar argument we can prove $z\in K_i(A,B)\Rightarrow z\in \Gamma_i^A(A,B)$, so the claim is proved. 
\item Although $K(A,B)$ is always sharper than $\Gamma(A,B)$ is,  $\Gamma(A,B)$ has the obvious advantage over $K(A,B)$ in its practicality. $\Gamma(A,B)$ is much easier to compute than $K(A,B)$, which is generally a union of complicated regions. It is also easy to see that $\Gamma(A,B)$ approaches $K(A,B)$ as $B$ approaches a diagonal matrix, see examples in section \ref{ex}.  $\Gamma(A,B)$ sacrifices some tightness for the sake of simplicity.
For instance, $K(A,B)$ is difficult to use for the analysis in section \ref{forerr}.
\item $G(A,B)$ and $\Gamma(A,B)$ are generally not comparable, see the examples in section \ref{ex}. 
However, 
we can see that $\Gamma_i(A,B)$ is a nontrivial set in the complex plane  $\mathbb{C}$ whenever $G_i(A,B)$ is, but the contrary does not hold. This can be verified by the following. Suppose $G_i(A,B)$ is a nontrivial set in $\mathbb{C}$,  
which means $(\sum_{j\not= i}|a_{i,j}|)^2+(\sum_{j\not= i}|b_{i,j}|)^2<|a_{i,i}|^2+|b_{i,i}|^2$. 
This is true only if $\sum_{j\not= i}|a_{i,j}|<|a_{i,i}|$ or $\sum_{j\not= i}|b_{i,j}|<|b_{i,i}|$, so  the $i$th row of at least one of  $A$ and $B$ has to be strictly diagonally dominant. Hence, $\Gamma_i(A,B)$ is a nontrivial subset of $\mathbb{C}$. 

To see the contrary is not true,  consider the pencil 
\begin{equation}\label{example1}
A_1-\lambda B_1=\begin{pmatrix}2&3\\3&2\end{pmatrix}-\lambda \begin{pmatrix}2&1\\1&2\end{pmatrix},\end{equation}
which has eigenvalues  $-1$ and $5/3$.  $\Gamma(A_1,B_1)$ for this pencil is 
$\Gamma(A_1,B_1)=\left\{z \in \mathbb{C}:\left|z-1\right|\leq 4 \right\}$. 
 In contrast, $G(A_1,B_1)$ is $G(A_1,B_1)= \left\{z \in \mathbb{C}:\chi(\lambda, 1)\leq  \sqrt{10/8}\right\},$
 which is useless because the chordal radius is larger than 1. 
\item 
When $B\simeq I$, $\Gamma(A,B)$ is always a tighter region than $G(A,B)$ is, because $G_i(A,I)$ is
\[\frac{|\lambda-a_{i,i}|}{\sqrt{1+|\lambda|^2}\sqrt{1+|a_{i,i}|^2}}\lesssim \sqrt{\frac{\left(\sum_{j\not= i}|a_{i,j}|\right)^2}{1+|a_{i,i}|^2}}= \sqrt{\frac{R_i^2}{1+|a_{i,i}|^2}}.\]
\[\therefore |\lambda-a_{i,i}|\lesssim \sqrt{1+|\lambda|^2}R_i,\]
 whereas $\Gamma_i^S(A,I)$ is the standard Gerschgorin set
\[ |\lambda-a_{i,i}|\leq  R_i,\]
 from which $\Gamma_i^S(A,B)\subseteq G_i(A,I)$ follows trivially.
\end{itemize}
 \subsection{A  tighter result}\label{gersec2}
Here we show that we can obtain a slightly tighter eigenvalue inclusion set by bounding the center of the disk \eqref{inz} more carefully. Instead of Lemma \ref{rbound},  we use the following two results.
\begin{lemma}\label{rbound2}
The point $1/(1+re^{i\theta})$ where $r\geq 0$ and $\theta \in[0,2\pi]$ lies on a circle of radius $r/(1-r^2)$ centered at $1/(1-r^2)$.
\end{lemma}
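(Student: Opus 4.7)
The plan is to verify the claim by a direct computation: put $1/(1+re^{i\theta})-1/(1-r^2)$ over a common denominator and show that its modulus is exactly $r/(1-r^2)$, independent of $\theta$. Combining fractions gives
\[
\frac{1}{1+re^{i\theta}}-\frac{1}{1-r^2}=\frac{(1-r^2)-(1+re^{i\theta})}{(1+re^{i\theta})(1-r^2)}=\frac{-r(r+e^{i\theta})}{(1+re^{i\theta})(1-r^2)},
\]
so the assertion reduces to showing $|r+e^{i\theta}|=|1+re^{i\theta}|$.

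The key observation is the identity
\[
|r+e^{i\theta}|^{2}=r^{2}+2r\cos\theta+1=|1+re^{i\theta}|^{2},
\]
which can be obtained by expanding both sides (using $e^{i\theta}+e^{-i\theta}=2\cos\theta$). Once this is in hand, the ratio $|r+e^{i\theta}|/|1+re^{i\theta}|$ equals $1$, and the earlier display yields
\[
\left|\frac{1}{1+re^{i\theta}}-\frac{1}{1-r^2}\right|=\frac{r}{|1-r^2|}=\frac{r}{1-r^2},
\]
the last equality using the implicit hypothesis $r<1$ inherited from the preceding discussion (where $|\gamma|\le r<1$).

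There is essentially no obstacle here: the lemma is a one-line trigonometric identity dressed up as a geometric statement. The only subtlety worth flagging is the restriction $r<1$, which is needed both for the denominator $1-r^2$ to be positive and for the point $1/(1+re^{i\theta})$ to be well defined for every $\theta$; this matches the range in which the lemma will be applied when bounding the center of the disk in \eqref{inz}.
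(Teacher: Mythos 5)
Your proof is correct and follows essentially the same path as the paper's: combine fractions, reduce to showing $|r+e^{i\theta}|=|1+re^{i\theta}|$, and conclude. The only cosmetic difference is that the paper establishes this last identity by factoring out $e^{i\theta}$ (so the ratio becomes $|1+re^{-i\theta}|/|1+re^{i\theta}|=1$ by conjugate symmetry) while you expand both squared moduli to $1+2r\cos\theta+r^2$; these are interchangeable one-line verifications.
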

\begin{proof}
\begin{align*}
\left|\frac{1}{1+re^{i\theta}}-\frac{1}{1-r^2}\right|&=\left|\frac{(1-r^2)-(1+re^{i\theta})}{(1+re^{i\theta})(1-r^2)}\right|\\
&=\left|\frac{r(r+e^{i\theta})}{(1+re^{i\theta})(1-r^2)}\right|\\
&=\left|\frac{re^{i\theta}(1+re^{-i\theta})}{(1+re^{i\theta})(1-r^2)}\right|\\
&=\left|\frac{r}{1-r^2}\right|.\quad (\because \left|\frac{1+re^{-i\theta}}{1+re^{i\theta}}\right|=1)
\end{align*}
\end{proof}
\begin{lemma}\label{rbound3}
 Denote by  $M(r)$ the disk of radius  $r/(1-r^2)$ centered at  $1/(1-r^2)$. 
If  $0\leq r'<r<1$ then $M(r')\subseteq M(r)$.
\end{lemma}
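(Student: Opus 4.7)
The plan is to reduce the inclusion to a simple monotonicity statement by either (a) checking the standard disk-containment criterion algebraically, or (b) recognizing $M(r)$ as the image of the disk $\{|\gamma|\leq r\}$ under the Möbius map $\gamma\mapsto 1/(1+\gamma)$. Either way the heavy lifting was already done in Lemma \ref{rbound2}, so the proof should be short.

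First I would recall the elementary fact that a closed disk $D(c_1,\rho_1)$ is contained in $D(c_2,\rho_2)$ if and only if $|c_1-c_2|+\rho_1\leq \rho_2$. Applied to $M(r')$ and $M(r)$ with $0\leq r'<r<1$, and noting that $1/(1-r'^2)\leq 1/(1-r^2)$, the inequality to verify becomes
\[
\frac{1}{1-r^2}-\frac{1}{1-r'^2}+\frac{r'}{1-r'^2}\leq \frac{r}{1-r^2},
\]
which rearranges to $\frac{1-r}{1-r^2}\leq \frac{1-r'}{1-r'^2}$, i.e.\ $\frac{1}{1+r}\leq \frac{1}{1+r'}$. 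This is immediate from $r'\leq r$, and completes the proof.

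Alternatively (and more conceptually), I would argue as follows. Lemma \ref{rbound2} shows that for each fixed $r\in[0,1)$, the circle $\{|\gamma|=r\}$ is mapped by the Möbius transformation $\varphi(\gamma)=1/(1+\gamma)$ onto the circle $\partial M(r)$. Since $\varphi$ is a homeomorphism of the Riemann sphere sending $0\mapsto 1\in M(r)$ and $-1\mapsto \infty\notin M(r)$, the closed disk $\{|\gamma|\leq r\}$ maps onto $M(r)$. Monotonicity of the inclusion $\{|\gamma|\leq r'\}\subseteq \{|\gamma|\leq r\}$ for $r'<r$ then transfers directly under $\varphi$ to yield $M(r')\subseteq M(r)$.

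No real obstacle is expected: both routes are short and rely only on arithmetic already implicit in Lemma \ref{rbound2}. The only subtlety worth flagging is that in the Möbius picture one must verify that $M(r)$ is the image of the disk (rather than its complement on the sphere), which is handled by checking a single interior point such as $\gamma=0$.
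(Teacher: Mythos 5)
Both of your routes are correct. Your first route is essentially the paper's proof: the disk-containment criterion $D(c_1,\rho_1)\subseteq D(c_2,\rho_2)\Leftrightarrow |c_1-c_2|+\rho_1\leq\rho_2$ (in the direction you need) \emph{is} the triangle-inequality argument the paper writes out inline, so the two are the same in content. Your algebraic reduction, however, is noticeably cleaner: you collapse the inequality to $1/(1+r)\leq 1/(1+r')$ in one step, whereas the paper computes the difference of the two sides and factors the numerator as $(1-r)(1-r')(r-r')$; same arithmetic, but yours reads more transparently. Your second route (viewing $M(r)$ as the image of $\{|\gamma|\leq r\}$ under the M\"obius map $\varphi(\gamma)=1/(1+\gamma)$ and transferring the nested-disks inclusion through $\varphi$) is a genuinely different, more conceptual argument that the paper does not take. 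It buys you a proof with no computation beyond the single interior-point check $\varphi(0)=1\in M(r)$, and it makes the roles of Lemmas \ref{rbound2} and \ref{rbound3} more unified (both become statements about where $\varphi$ sends disks); the cost is that you need to invoke the fact that a M\"obius transformation is a homeomorphism of the sphere carrying circles to circles, which is standard but heavier machinery than the paper's two-line estimate. Either version would be an acceptable substitute for the paper's proof.
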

\begin{proof}
We prove by showing that $z\in M(r')\Rightarrow z\in M(r)$. Suppose $z\in M(r')$. $z$ satisfies $\left|z-\frac{1}{1-(r')^2}\right|\leq \left|\frac{r'}{1-(r')^2}\right|$, so
\begin{align*}
\left|z-\frac{1}{1-r^2}\right|
&\leq\left|z-\frac{1}{1-(r')^2}\right|+\left|\frac{1}{1-(r')^2}-\frac{1}{1-r^2}\right|\\
&\leq  \left|\frac{r'}{1-(r')^2}\right|+\left|\frac{r^2-(r')^2}{(1-(r')^2)(1-r^2)}\right|\\
&= \frac{r'(1-r^2)+r^2-(r')^2}{(1-(r')^2)(1-r^2)}.
\end{align*}
Here, the right-hand side is smaller than $r/(1-r^2)$, because
\begin{align*}
\frac{r}{1-r^2}- \frac{r'(1-r^2)+r^2-(r')^2}{(1-(r')^2)(1-r^2)}
&= \frac{r(1-(r')^2)-(r'(1-r^2)+r^2-(r')^2)}{(1-(r')^2)(1-r^2)}\\
&= \frac{(1-r)(1-r')(r-r')}{(1-(r')^2)(1-r^2)}>0.
\end{align*}
Hence $\left|z-\frac{1}{1-r^2}\right|\leq \frac{r}{1-r^2}$,  so $z\in M(r)$. Since the above argument holds for any $z\in M(r')$,  $M(r')\subseteq M(r)$ is proved. 
\end{proof}

The implication of these two Lemmas applied to  \eqref{inz} is that the center $s/(1+\gamma)$ lies in $sM(r)$. Therefore we  conclude that $z$ that satisfies \eqref{inz} is included in the disk centered at $\displaystyle \frac{s}{1-r^2}$, and of radius $\displaystyle\frac{sr}{1-r^2}+\frac{t}{1-r}$. 

Therefore, it follows  that  $\lambda$ that satisfies \eqref{kok2} lies in the disk of radius $\displaystyle\frac{|a_{i,i}|r_i+R_i(1+r_i)}{|b_{i,i}|(1-r_i^2)}$,  centered at $\displaystyle\frac{a_{i,i}}{b_{i,i}(1-r_i^2)}$.



Similarly, we can conclude that $1/\lambda$ that satisfies \eqref{kok2a} has to satisfy
\begin{equation}\label{kok2a2ti}
 \left|\frac{1}{\lambda} -\frac{b_{i,i}}{a_{i,i}}\cdot \frac{1}{1-(r_i^A)^2}\right|\leq  \frac{|b_{i,i}|r_i^A+R_i^A(1+r_i^A)}{|a_{i,i}|(1-(r_i^A)^2)}. 
\end{equation}
 Recalling the analysis that derives \eqref{simple}, we see that when $b_{i,i}\not=0$, this inequality is equivalent to 
 \begin{equation}   \label{simpletight}
|\lambda-\tilde \alpha_i|\leq \tilde\beta_i |\lambda|,   
 \end{equation}
 where $\displaystyle\tilde\alpha_i=a_{i,i}(1-(r_i^A)^2)/b_{i,i}$, $\displaystyle\tilde \beta_i=r_i^A+R_i^A(1+r_i^A)/|b_{i,i}|$. 

The equality  of \eqref{simpletight} holds on an Apollonius circle, whose radius is 
$\displaystyle\tilde\rho^A_i=\frac{|\tilde \alpha_i|\tilde\beta_i}{|1-\tilde\beta_i^2|},$ and  center is $\displaystyle c_i=\frac{\tilde\alpha_i}{1-\tilde\beta_i^2}$.

The above analyses leads to the following definition, analogous to that in Definition \ref{def1}. 
 \begin{definition}\label{def2}
We use the same notations $S ,S^A,r_i,R_i,r_i^A,R_i^A$ as in Definition \ref{def1}. 

For $i\in S^B$, define the disk $\tilde\Gamma_i^B$ by
\begin{equation}\label{mygbtilde}
\tilde \Gamma_i^B(A,B)\equiv\left\{z \in \mathbb{C}:\left|z-\frac{a_{i,i}}{b_{i,i}}\frac{1}{1-(r_i )^2}\right|\leq \tilde\rho_i \right\} 
\quad (i=1, 2, \cdots, n),
\end{equation} 
where the radii $\tilde\rho_i $ are defined by
\[\tilde\rho_i = \frac{|a_{i,i}|r_i+R_i(1+r_i)}{|b_{i,i}|(1-r_i^2)}.\]
For $i\notin S^B$, we set $\tilde\Gamma_i^B(A,B)=\mathbb{C}$. 

 $\tilde\Gamma_i^A(A,B)$ is defined by the following.  For $i\in S^A$ and $b_{i,i}\not= 0$,  denote 
\[\tilde\alpha_i=\frac{a_{i,i}}{b_{i,i}}(1-(r_i^A)^2), \tilde \beta_i=r_i^A+\frac{R_i^A(1+r_i^A)}{|b_{i,i}|},  \tilde c_i=\frac{\alpha_i}{1-\beta_i^2}\ \mbox{and}\  \tilde\rho_i^A=\frac{|\alpha_i|\beta_i}{|1-\beta_i^2|}.\]
 Then, $\tilde\Gamma_i^A(A,B)$ is defined similarly to $\Gamma_i^A(A,B)$ (by replacing $\alpha_i,\beta_i,c_i,\rho_i^A$ with $\tilde\alpha_i,\tilde\beta_i,\tilde c_i,\tilde \rho_i^A$ respectively in \eqref{myg2}-\eqref{myghalf}), depending on whether $\tilde\beta_i>1,\tilde\beta_i<1$ or $\tilde\beta_i=1$. 

When $b_{i,i}=0$ or $i\notin S^A$, $\tilde\Gamma^A_i(A,B)=\Gamma^A_i(A,B)$ 
defined  in Definition \ref{def1}. 

\end{definition}

Thus we arrive at  a slightly tighter Gerschgorin theorem.
\begin{theorem}[Tighter Gerschgorin-type theorem]\label{ger2thm}
 Let $A,B$ be $n\times n$ complex  matrices. 

All the eigenvalues of the pencil $A-\lambda B$  lie in the  union of $n$ regions $\tilde\Gamma_i(A,B)$ in the complex plane defined by
\begin{equation}\label{mygtilde}
\tilde \Gamma_i(A,B)\equiv  \tilde\Gamma_i^B(A,B)\cap \tilde\Gamma_i^A(A,B).
\end{equation} 
 In other words, if $\lambda$ is an eigenvalue of the pencil, then
\[\lambda\in \tilde\Gamma(A,B)\equiv\bigcup_{1\leq i\leq n}\tilde\Gamma_i(A,B).\]
\end{theorem}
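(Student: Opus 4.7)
The plan is to mirror the proof of Theorem \ref{gger}, but substitute the tighter center/radius bounds derived in Section \ref{gersec2}. Essentially all the analytic work has already been carried out in the lead-up to Definition \ref{def2}; the proof should just assemble those pieces.

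First I would handle the finite eigenvalue case. Let $A\bm{x}=\lambda B\bm{x}$ with $\bm{x}\neq 0$, and pick an index $i$ with $|x_i|=\max_j|x_j|$. If $i\in S^B$, the argument leading to \eqref{kok2} goes through unchanged, giving
\[
\left|\lambda-\frac{a_{i,i}}{b_{i,i}}\cdot\frac{1}{1+\gamma_i}\right|\leq \frac{R_i}{|b_{i,i}|}\frac{1}{|1+\gamma_i|},\quad |\gamma_i|\leq r_i<1.
\]
Now I apply Lemmas \ref{rbound2} and \ref{rbound3} instead of Lemma \ref{rbound}: the former shows $1/(1+\gamma_i)$ lies on the circle $M(|\gamma_i|)$, the latter that $M(|\gamma_i|)\subseteq M(r_i)$, hence the center $(a_{i,i}/b_{i,i})/(1+\gamma_i)$ lies in the disk of radius $|a_{i,i}|r_i/(|b_{i,i}|(1-r_i^2))$ centered at $a_{i,i}/(b_{i,i}(1-r_i^2))$. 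Combined with the radius bound \eqref{radius}, this places $\lambda$ in the disk \eqref{mygbtilde}, i.e.\ $\lambda\in\tilde\Gamma_i^B(A,B)$. If in addition $i\in S^A$, repeating the parallel derivation on \eqref{kok2a} and reinterpreting the resulting bound on $1/\lambda$ via an Apollonius circle (exactly as in the passage from \eqref{simple} to \eqref{apolo} in Section \ref{gersec1}, but now with parameters $\tilde\alpha_i,\tilde\beta_i$) yields $\lambda\in\tilde\Gamma_i^A(A,B)$. Hence $\lambda\in\tilde\Gamma_i(A,B)$.

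Next I would dispose of the infinite eigenvalue case $\lambda=\infty$, where $B\bm{x}=0$. As in the proof of Theorem \ref{gger}, for $i$ with $|x_i|=\max_j|x_j|$ the $i$th row of $B$ cannot be strictly diagonally dominant, so $\tilde\Gamma_i^B(A,B)=\mathbb{C}$ and only $\tilde\Gamma_i^A(A,B)$ is nontrivial. If $i\notin S^A$ this set is also $\mathbb{C}$ and there is nothing to show. Otherwise $|b_{i,i}|<R_i^A$, which I would use to verify $\tilde\beta_i>1$: a direct sign check analogous to the one in the proof of Theorem \ref{gger} shows
\[
|b_{i,i}|\tilde\beta_i-|b_{i,i}|=|b_{i,i}|r_i^A+R_i^A(1+r_i^A)-|b_{i,i}|>R_i^A-|b_{i,i}|>0 \quad\text{(up to the $r_i^A$ correction)},
\]
so that $\tilde\Gamma_i^A(A,B)$ is the exterior-type region containing $\{\infty\}$ and $\infty\in\tilde\Gamma_i(A,B)$.

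I expect the main obstacle to be the infinite-eigenvalue case: the specific inequality $\tilde\beta_i>1$ must be checked with the new parameters, and although the computation is routine it is slightly different from the one used for $\beta_i$ in Theorem \ref{gger}, so care is needed to confirm it still goes through under the same hypothesis $|b_{i,i}|<R_i^A$. Once that is verified, the theorem follows immediately by taking the union over $i$.
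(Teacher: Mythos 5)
Your proposal is correct and takes the same route the paper does; in fact the paper simply remarks that ``the proof is the same as the one for Theorem~\ref{gger} and is omitted,'' so your reconstruction supplies exactly the details the author had in mind: replace Lemma~\ref{rbound} by Lemmas~\ref{rbound2} and~\ref{rbound3} to get the sharper center and radius for $\tilde\Gamma_i^B$, repeat for $1/\lambda$ to get the Apollonius region $\tilde\Gamma_i^A$, and handle $\lambda=\infty$ as in Theorem~\ref{gger}. One small remark: the inequality you flagged as a potential obstacle is in fact clean --- with $|b_{i,i}|<R_i^A$ one has
\[
|b_{i,i}|\tilde\beta_i-|b_{i,i}|=|b_{i,i}|r_i^A+R_i^A(1+r_i^A)-|b_{i,i}|
= r_i^A\bigl(|b_{i,i}|+R_i^A\bigr)+\bigl(R_i^A-|b_{i,i}|\bigr)>0,
\]
so $\tilde\beta_i>1$ holds exactly, with no ``$r_i^A$ correction'' caveat needed. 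You might also state the case split a bit more carefully (if $i\notin S^B$ then $\tilde\Gamma_i^B=\mathbb{C}$ trivially, and the $\tilde\Gamma_i^A$ argument applies independently of whether $i\in S^B$), but the substance is all there.
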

 The  proof is the same as the one  for Theorem \ref{gger} and is omitted.  The simplified results of Theorem \ref{ger2thm} analogous to Corollaries \ref{simp} and \ref{simp2} can also be  derived but is  omitted.

 It is easy to see that $\tilde \Gamma_i(A,B)\subseteq \Gamma_i(A,B)$ for all $i$, so $\tilde \Gamma(A,B)$ is a sharper eigenvalue bound than $\Gamma(A,B)$. 
For example, for the pencil \eqref{example1}, we have $\tilde \Gamma_i(A_1,B_1)=\left\{z \in \mathbb{C}:\left|z-\frac{4}{3}\right|\leq \frac{11}{3} \right\}$. 
We can also  see that  $\tilde \Gamma(A,B)$ shares all the properties mentioned at the end of  section \ref{gersec1}. 
The reason  we presented $\Gamma(A,B)$ although $\tilde \Gamma(A,B)$ is always tighter is that 
  $\Gamma(A,B)$  has centers $a_{i,i}/b_{i,i}$, which may make it simpler to apply than $\tilde \Gamma(A,B)$. In fact, in the analysis in  section \ref{forerr} we  only use Theorem \ref{gger}. 

 \subsection{Localizing a specific number of eigenvalues}\label{gersec3}
We are sometimes interested not only in the eigenvalue bounds, but also  in the number of eigenvalues included in a certain region. The classical Gerschgorin theorem serves this need \cite{varga}, which has the property  that if a region contains exactly $k$ Gerschgorin disks and is disjoint from the other disks, then it contains exactly $m$ eigenvalues. 
This fact is used  to derive a perturbation result for simple eigenvalues in \cite{wilkinson:1965}. An analogous result holds for the set $G(A,B)$ \cite[Ch.5]{stewart-sun:1990}. 
Here we show that our Gerschgorin set also possesses the same property.

\begin{theorem}\label{disjoint} If a union of $k$ Gerschgorin regions $\Gamma_i(A,B)$ (or $\tilde \Gamma_i(A,B)$) in the above Theorems (Theorem \ref{gger}, \ref{ger2thm} or Corollary \ref{simp}, \ref{simp2})  is disjoint from the remaining $n-k$ regions and is not the entire complex plane $\mathbb{C}$, then exactly $k$ eigenvalues of the pencil $A-\lambda B$ lie in the union.
\end{theorem}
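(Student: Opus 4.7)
The plan is a standard homotopy (continuity) argument, modeled on the proof of the analogous fact for the classical Gerschgorin theorem. Let $D_A,D_B$ denote the diagonal parts of $A$ and $B$, and define the pencil $A(t)-\lambda B(t)$ via $A(t)=D_A+t(A-D_A)$ and $B(t)=D_B+t(B-D_B)$ for $t\in[0,1]$. At $t=1$ one recovers the original pencil, while at $t=0$ the pencil is diagonal with the $n$ Riemann-sphere eigenvalues $a_{i,i}/b_{i,i}$ (read as $\infty$ when $b_{i,i}=0$); the standing assumption $i\in S^A\cup S^B$ forces at least one of $a_{i,i},b_{i,i}$ to be nonzero for every $i$, so the pencil is regular at $t=0$. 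Since the off-diagonal quantities $r_i,R_i,r_i^A,R_i^A$ all scale linearly in $t$, each region $\Gamma_i(A(0),B(0))$ collapses to the single point $\{a_{i,i}/b_{i,i}\}$.

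The key technical step is monotonicity: I would show that $\Gamma_i(A(t),B(t))\subseteq \Gamma_i(A(t'),B(t'))$ whenever $t\leq t'$. For $\Gamma_i^B$ this is immediate, since the center $a_{i,i}/b_{i,i}$ is fixed and the radius $\rho_i(t)=t(|a_{i,i}|r_i+R_i)/(|b_{i,i}|(1-tr_i))$ is visibly increasing in $t$. For $\Gamma_i^A$ with $b_{i,i}\neq 0$, the uniform description $\{z\in\mathbb{C}\cup\{\infty\}:|z-\alpha_i|\leq \beta_i(t)|z|\}$ covers the three subcases $\beta_i<1$, $\beta_i=1$, $\beta_i>1$ simultaneously and is manifestly monotone in $\beta_i(t)$, which is itself increasing in $t$; the cases $b_{i,i}=0$ and $i\notin S^B$ are trivial. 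The same argument applies verbatim to $\tilde\Gamma_i$, since \eqref{kok2a2ti} has the same shape.

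Let $U(t)$ be the union of the $k$ designated regions at time $t$ and $V(t)$ the union of the remaining $n-k$. Monotonicity yields $U(t)\cap V(t)\subseteq U(1)\cap V(1)=\emptyset$, so the two closed subsets of the Riemann sphere stay disjoint throughout the homotopy. Theorem~\ref{gger} applied at each $t$ confines every eigenvalue to $U(t)\cup V(t)$, and by the continuous dependence of the eigenvalues on the pencil entries (in the chordal metric on the Riemann sphere), each of the $n$ eigenvalue trajectories stays in a single component for all $t$. At $t=0$ the eigenvalue $a_{i,i}/b_{i,i}$ sits in $\Gamma_i(A(0),B(0))$, so disjointness forces exactly $k$ eigenvalues to lie in $U(0)$; the same count therefore holds at $t=1$. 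The chief technical obstacle is the continuous-dependence step on the Riemann sphere: one has to verify that the pencil $A(t)-\lambda B(t)$ remains regular at every $t\in[0,1]$ (which follows from the hypothesis, monotonicity, and Theorem~\ref{gger}, which together exhibit a point of $\mathbb{C}\cup\{\infty\}$ outside $U(t)\cup V(t)$), and that eigenvalue trajectories may traverse $\infty$ without leaving their component.
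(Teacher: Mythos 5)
Your proposal is correct and follows essentially the same homotopy argument as the paper: deform from the diagonal pencil via $A(t)=\tilde A+t(A-\tilde A)$, $B(t)=\tilde B+t(B-\tilde B)$, observe that the Gerschgorin regions shrink as $t\to 0$, invoke continuity of eigenvalues of a regular pencil in the chordal metric on the Riemann sphere, and verify regularity for all $t$ by exhibiting a point of $\hat{\mathbb C}$ lying outside every region. The only cosmetic differences are that you spell out the monotonicity step (which the paper dismisses as ``easy to see'') and make explicit the counting at $t=0$; these are welcome details but not a different route.
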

\begin{proof}

We prove the result for $\Gamma_i(A,B)$. The other sets can be treated in an entirely identical way. 

 We use the same trick used for proving the analogous result for the set $G(A,B)$, shown in \cite[Ch.5]{stewart-sun:1990}. 
Let  $\tilde A=\mbox{diag}(a_{11},a_{22},\cdots,a_{nn}), \tilde B=\mbox{diag}(b_{11},b_{22},\cdots,b_{nn})$ and define
\[A(t)=\tilde A+t(A-\tilde A), \quad B(t)=\tilde B+t(B-\tilde B).\]
It is easy to see that the Gerschgorin disks $\Gamma_i(A(t),B(t))$ get enlarged as $t$ increases from $0$ to $1$. 

 In \cite{stewart-sun:1990} it is shown in the chordal metric that the eigenvalues of a regular  pencil $A-\lambda B$ are continuous functions of the elements provided that the pencil is regular. 

 Note that each of the regions $\Gamma_i(A(t),B(t))$ is a closed and bounded subset of $\mathbb{C}$ in the chordal metric, and that if a union of $k$ regions $\Gamma_i(A(t),B(t))$ is disjoint from the other $n-k$ regions in the Euclidean metric, then this disjointness holds also in the chordal metric. 
Therefore, if the  pencil $A(t)-\lambda B(t)$ is regular for $0\leq t\leq 1$, then an eigenvalue that is included in a certain union of $k$ disks $\bigcup_{1\leq i\leq k}\Gamma_i(A(t),B(t))$ cannot jump to another disjoint region as $t$ increases, so the claim is proved. Hence it suffices to prove that the  pencil $A(t)-\lambda B(t)$ is regular. 

 The regularity is proved by contradiction.  If $A(t)-\lambda B(t)$ is singular for some $0\leq t\leq 1$, then any point  $z\in \mathbb{C}$ is an eigenvalue of the pencil. However, the disjointness assumption implies that 
 there must exist a point $z'\in \mathbb{C}$ such that $z'$ lies in none of the Gerschgorin disks, so $z'$ cannot be an eigenvalue. Therefore, $A(t)-\lambda B(t)$ is necessarily regular for  $0\leq t\leq 1$. 

\end{proof}
\section{Examples}\label{ex}
Here we show some examples to illustrate the regions we discussed above. As test matrices we consider the simple pencil $A-\lambda B\in \mathbb{C}^{n\times n}$ where
\begin{equation}
  \label{testmat}
A=\begin{pmatrix}4&a&&\\a&4&\ddots&\\&\ddots&\ddots&a\\&&a&4\end{pmatrix}\quad \mbox{and}\quad B=\begin{pmatrix}4&b&&\\b&4&\ddots&\\&\ddots&\ddots&b\\&&b&4\end{pmatrix}.  
\end{equation}
Note that $\Gamma(A,B)$, $\tilde \Gamma(A,B)$ and $K(A,B)$ are nontrivial regions  if $b<2$, and $G(A,B)$ is nontrivial only if $a^2+b^2<8$.
Figure \ref{geronly} shows our results  $\Gamma(A,B)$ and $\tilde \Gamma(A,B)$ for different parameters $(a,b)$. The two crossed points indicate the smallest and largest eigenvalues of the pencil \eqref{testmat} when the matrix size is $n=100$. For $(a,b)=(1,2)$ the largest eigenvalue (not shown) was $\simeq 1034$. Note that $\Gamma(A,B)=\Gamma^B(A,B)$ when $(a,b)=(2,1)$ and $\Gamma(A,B)=\Gamma^A(A,B)$ when $(a,b)=(1,2)$. 
\begin{figure}[htbp]
\begin{tabular}{cc}
\begin{minipage}{0.33\hsize}
\begin{center}
\includegraphics[width=47mm]{./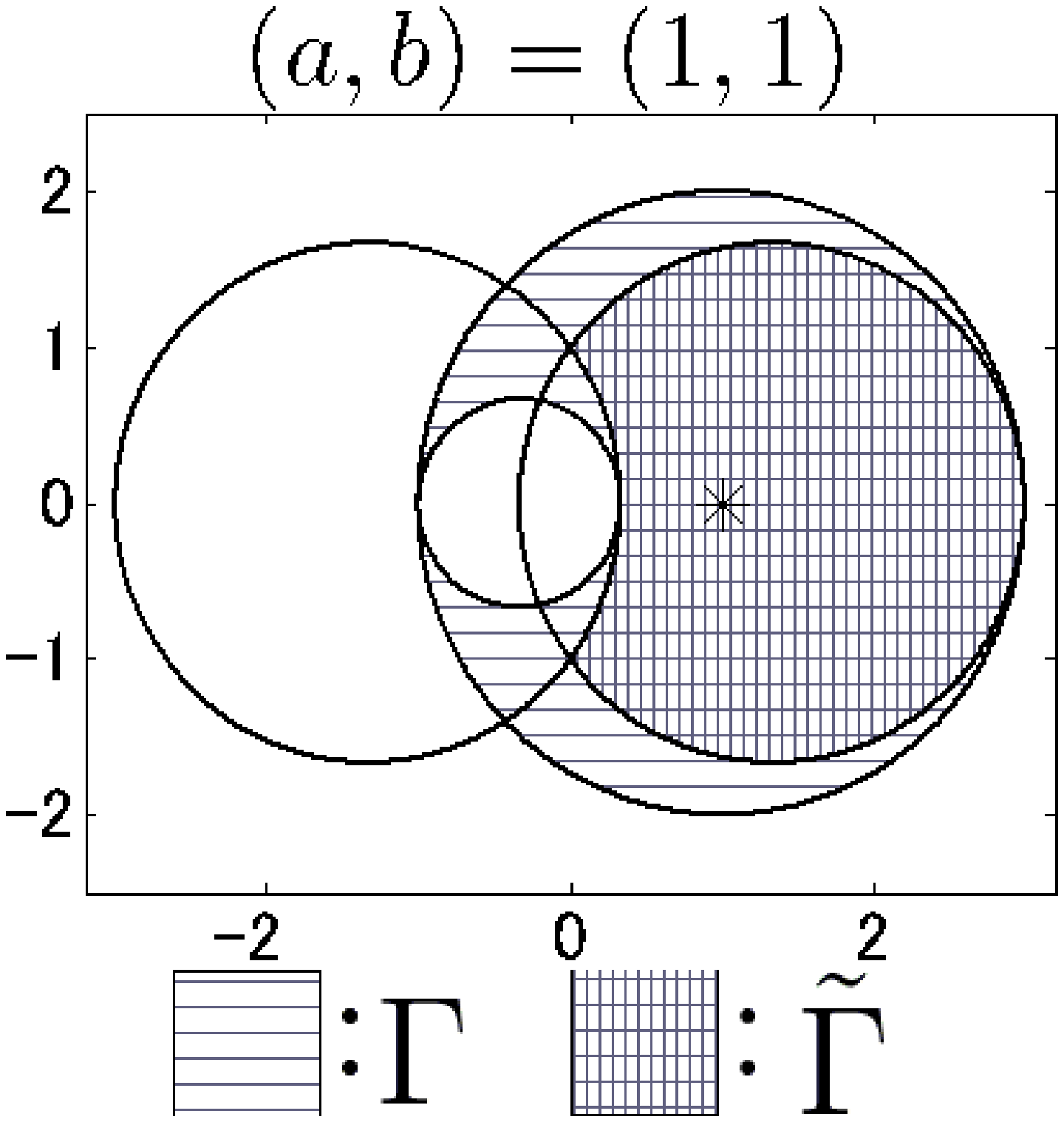}
\end{center}
\end{minipage}
\begin{minipage}{0.33\hsize}
\begin{center}
\includegraphics[width=47mm]{./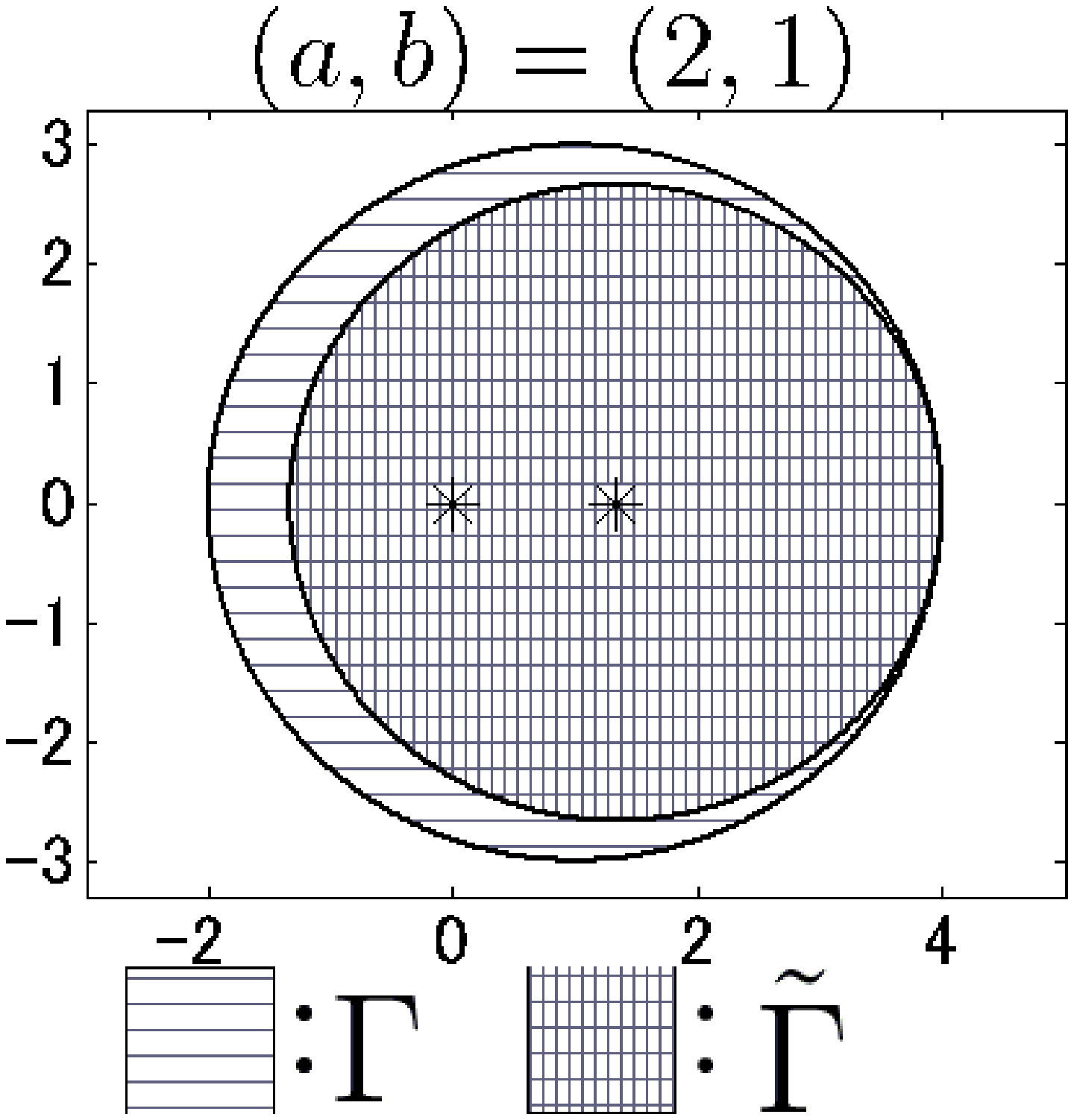}
\end{center}
\end{minipage}
\begin{minipage}{0.33\hsize}
\begin{center}
\includegraphics[width=47mm]{./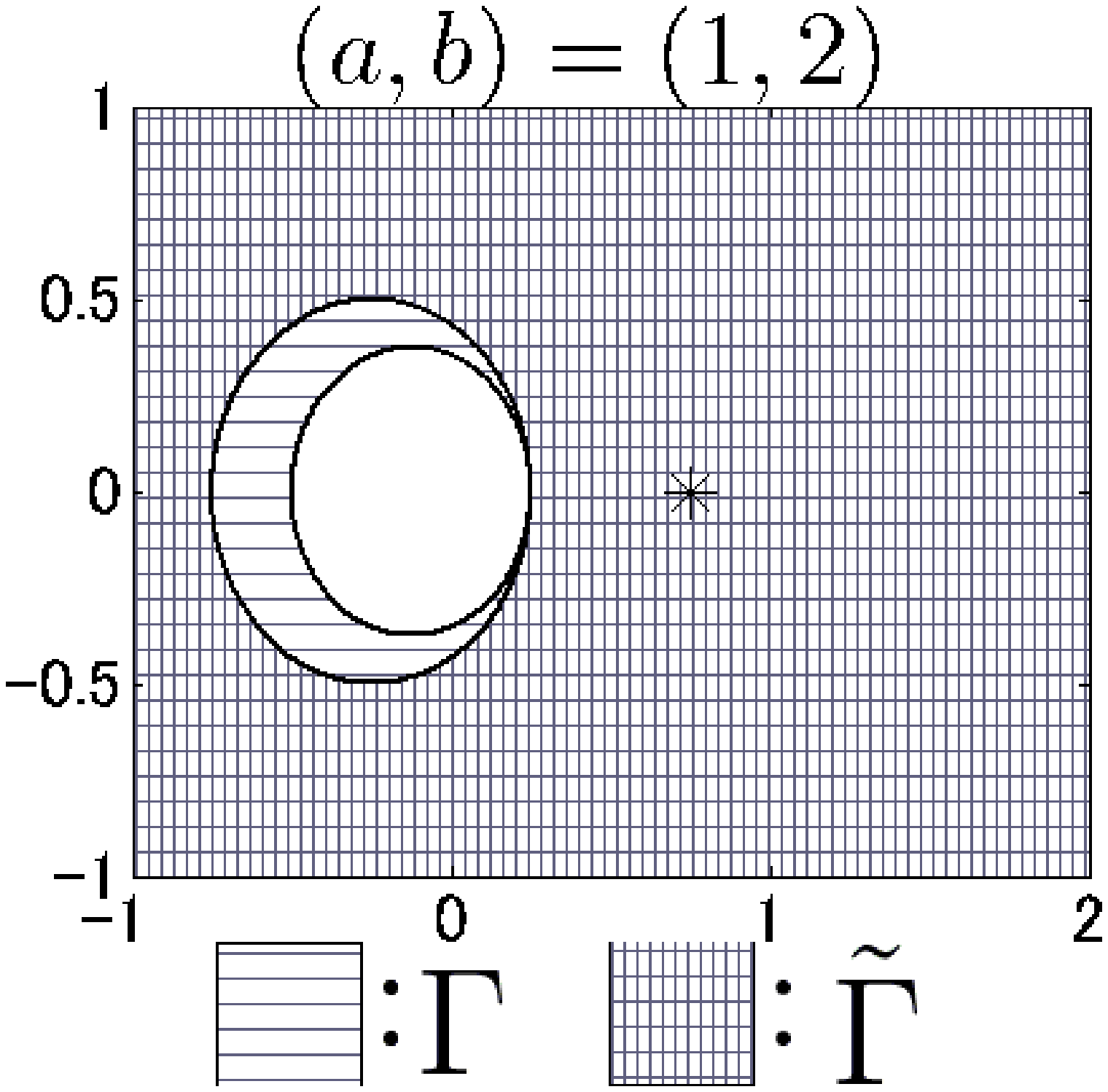}
\end{center}
\end{minipage}
\end{tabular}
\caption{Plots of $\Gamma(A,B)$ and $\tilde\Gamma(A,B)$ for matrices \eqref{testmat} with different $a,b$.}
\label{geronly}
\end{figure}

The purpose of the figures below is to compare our results with the known results $G(A,B)$ and $K(A,B)$. As for our results we only show $\Gamma^S(A,B)$ for simplicity.  Figure \ref{gerandste} compares $\Gamma^S(A,B)$ with $G(A,B)$. We observe that in the cases $(a,b)=(2,1),(3,1)$,  $\Gamma(A,B)$ is a much more useful set than $G(A,B)$ is, which in the latter case is the whole complex plane. This reflects the observation given in section \ref{gersec1} that $\Gamma(A,B)$ is always tighter when $B\simeq I$. 
\begin{figure}[htbp]
\begin{tabular}{cc}
\begin{minipage}{0.33\hsize}
\begin{center}
\includegraphics[width=47mm]{./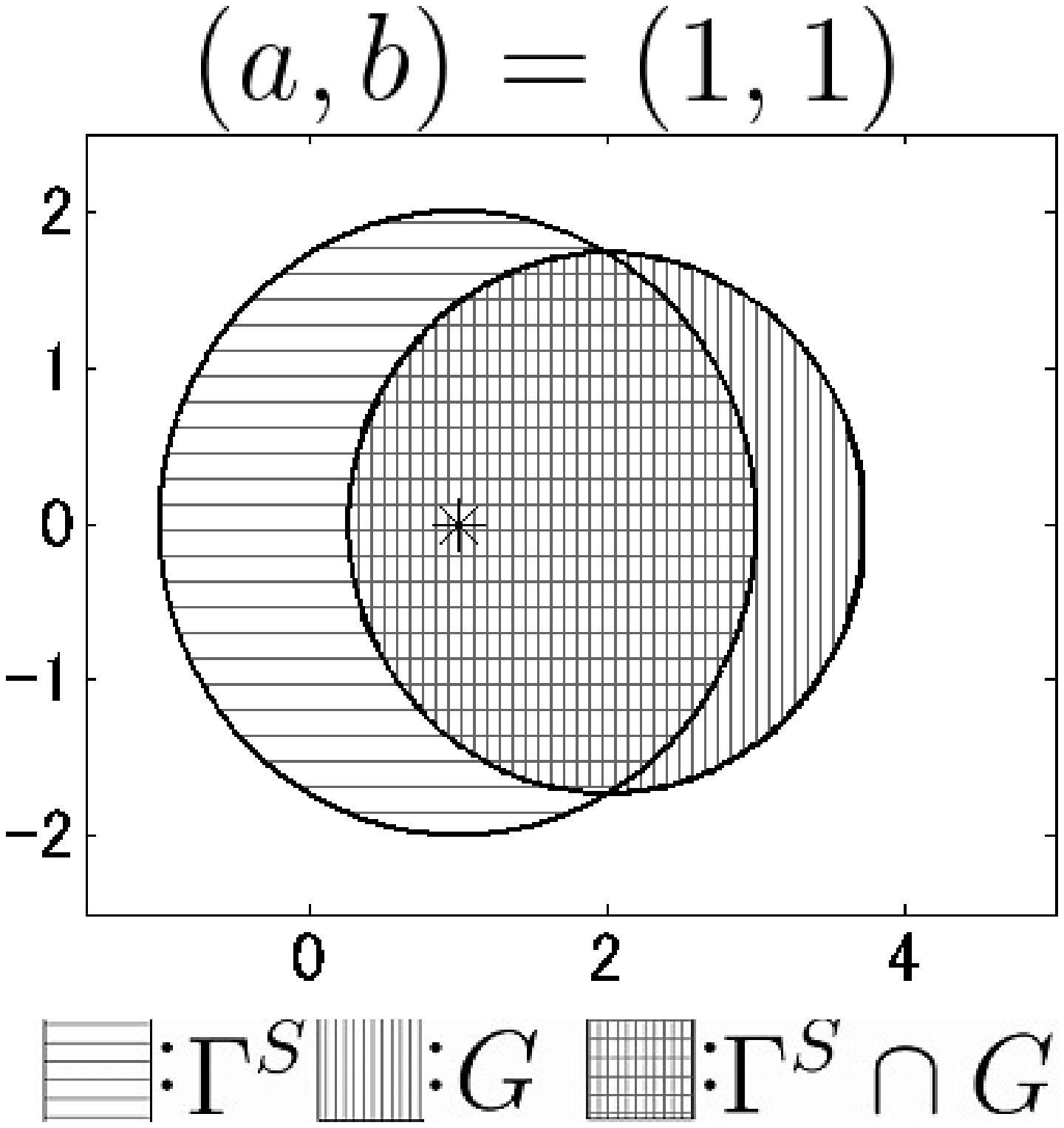}
\end{center}
\end{minipage}
\begin{minipage}{0.33\hsize}
\begin{center}
\includegraphics[width=47mm]{./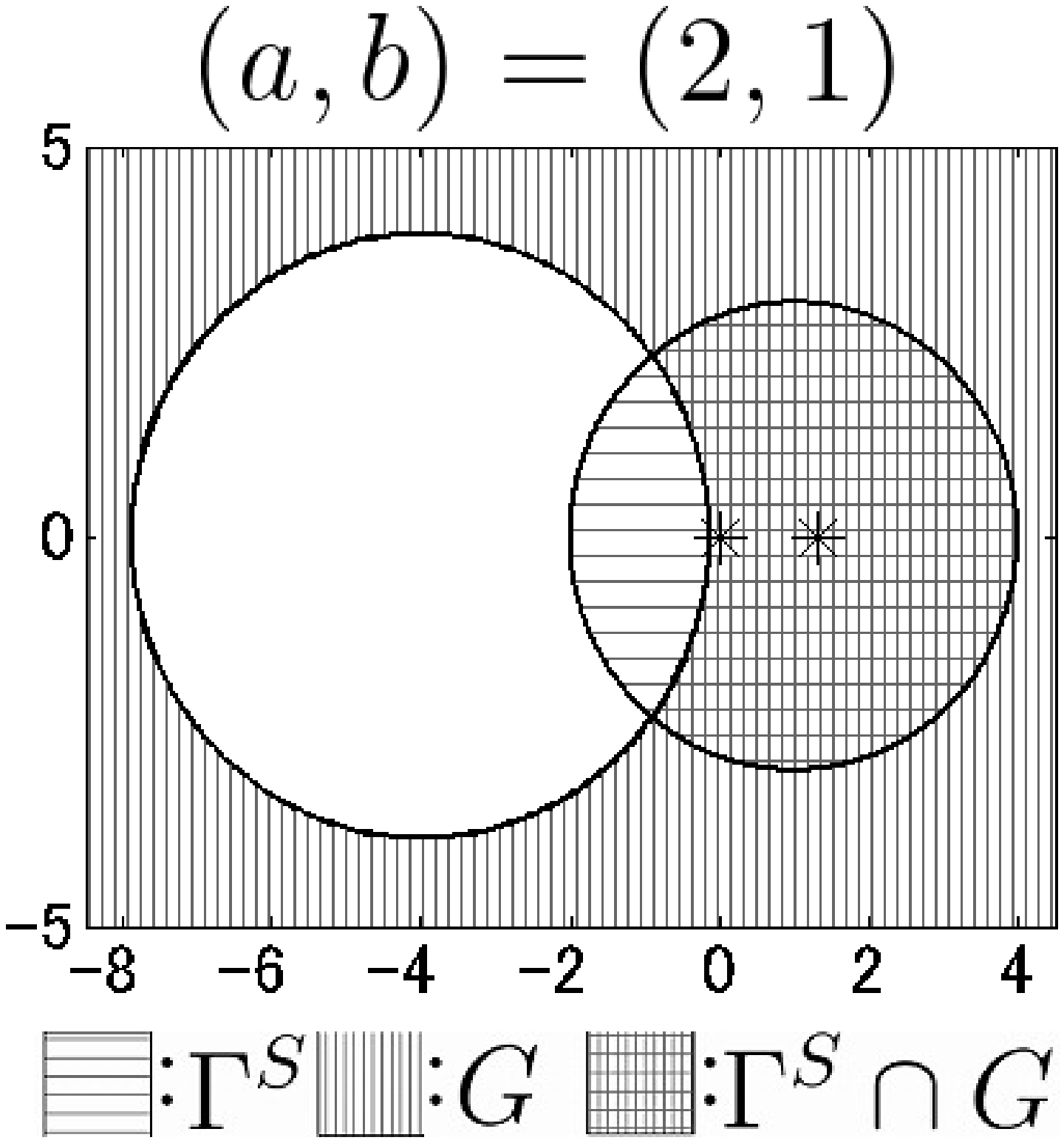}
\end{center}
\end{minipage}
\begin{minipage}{0.33\hsize}
\begin{center}
\includegraphics[width=47mm]{./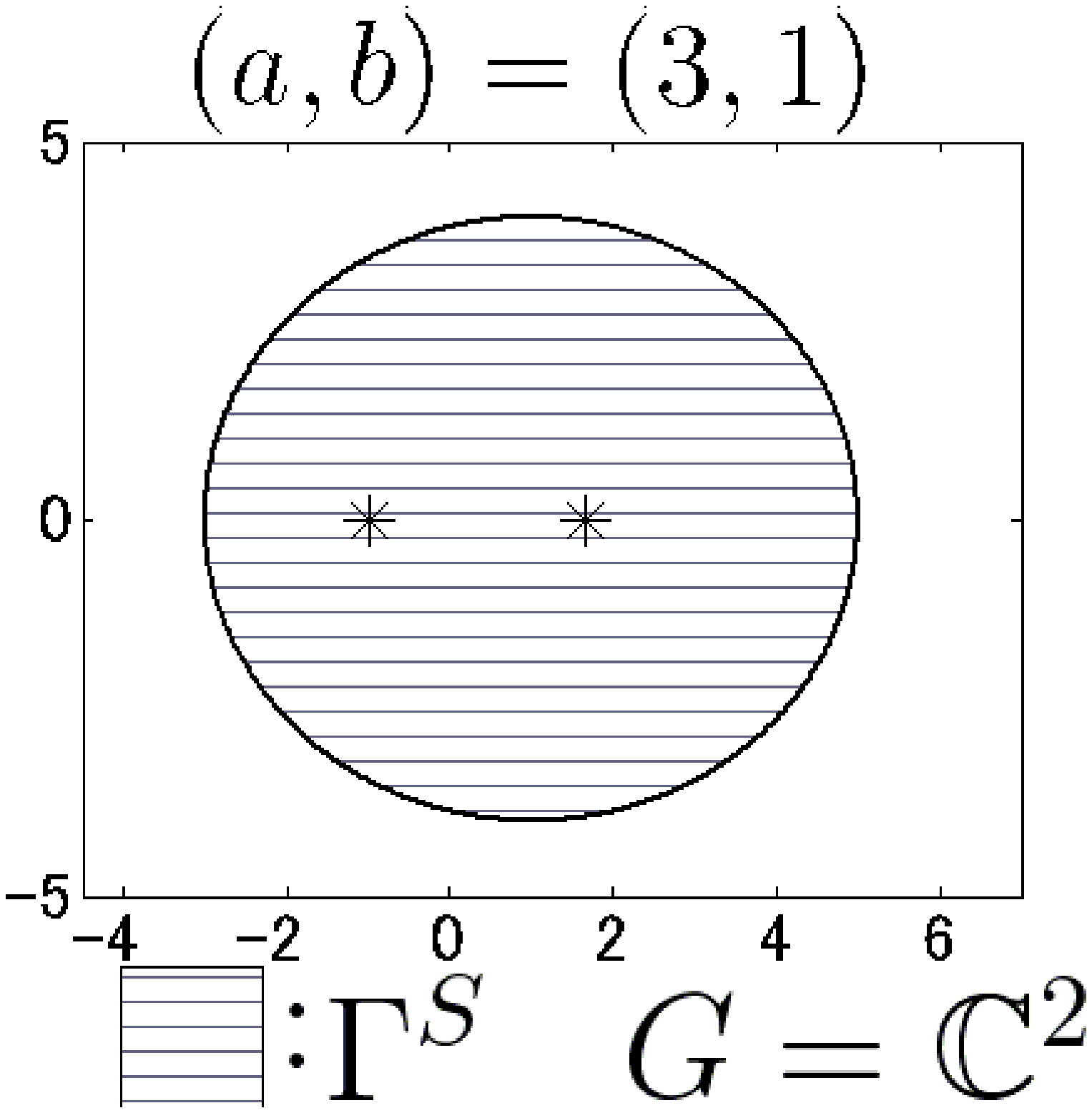}
\end{center}
\end{minipage}
\end{tabular}
\caption{Plots of $\Gamma^S(A,B)$ and $G(A,B)$ for matrices \eqref{testmat} with different $a,b$.}
\label{gerandste}
\end{figure}

 Figure \ref{gerandvar} compares $\Gamma^S(A,B)$ with $K(A,B)$, in which the boundary of $\tilde\Gamma^S(A,B)$ is shown as dashed circles. We verify the relation $K(A,B)\subseteq \tilde\Gamma(A,B)\subseteq\Gamma(A,B)$. These three sets become equivalent when $B$ is nearly diagonal, as shown in  the middle graph. 
The right graph shows the regions for the matrix defined in Example 1 in \cite{gengervarga}, in which all the eigenvalues are shown as crossed points.
\begin{figure}[htbp]
\begin{tabular}{cc}
\begin{minipage}{0.33\hsize}
\begin{center}
\includegraphics[width=47mm]{./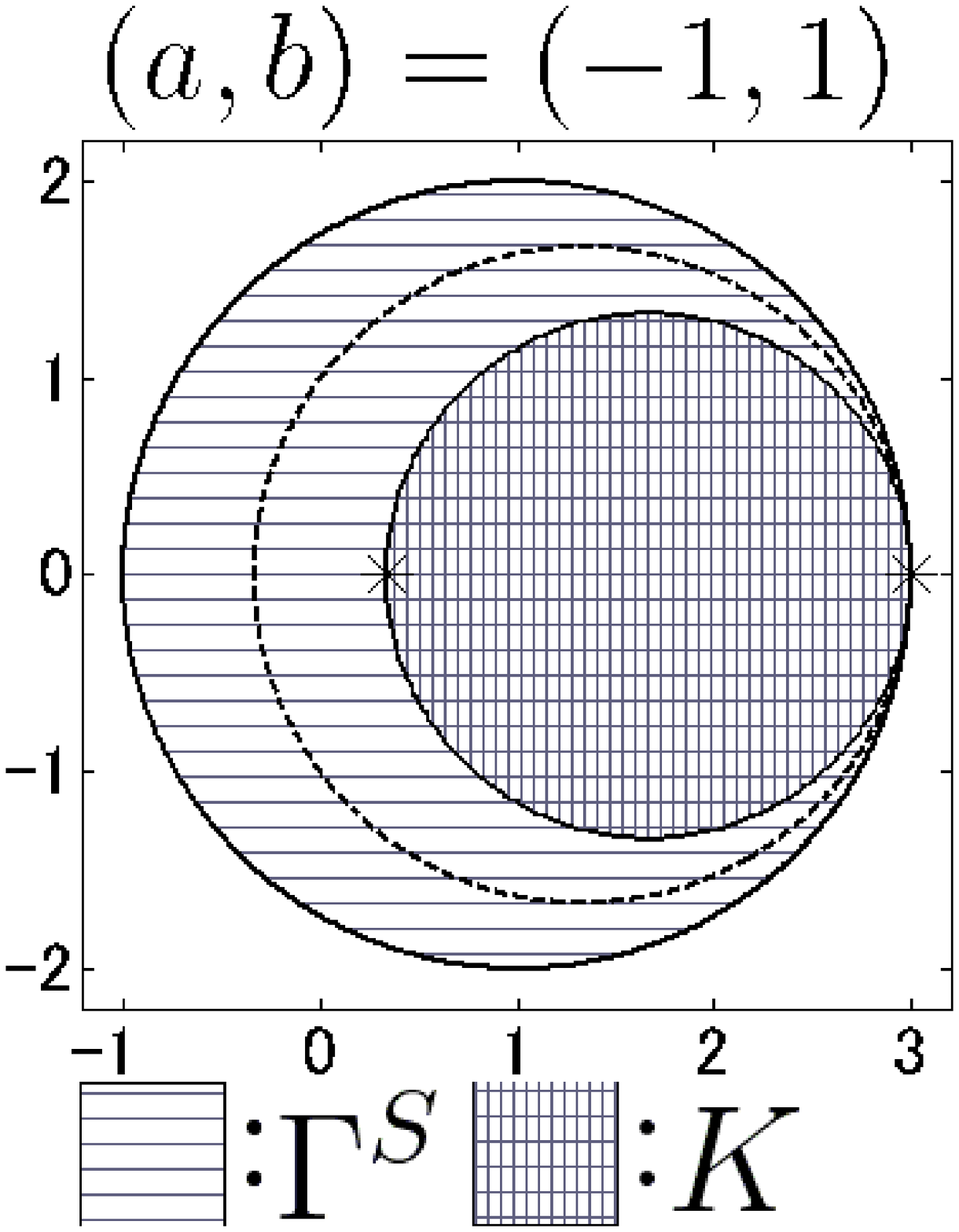}
\end{center}
\end{minipage}
\begin{minipage}{0.33\hsize}
\begin{center}
\includegraphics[width=47mm]{./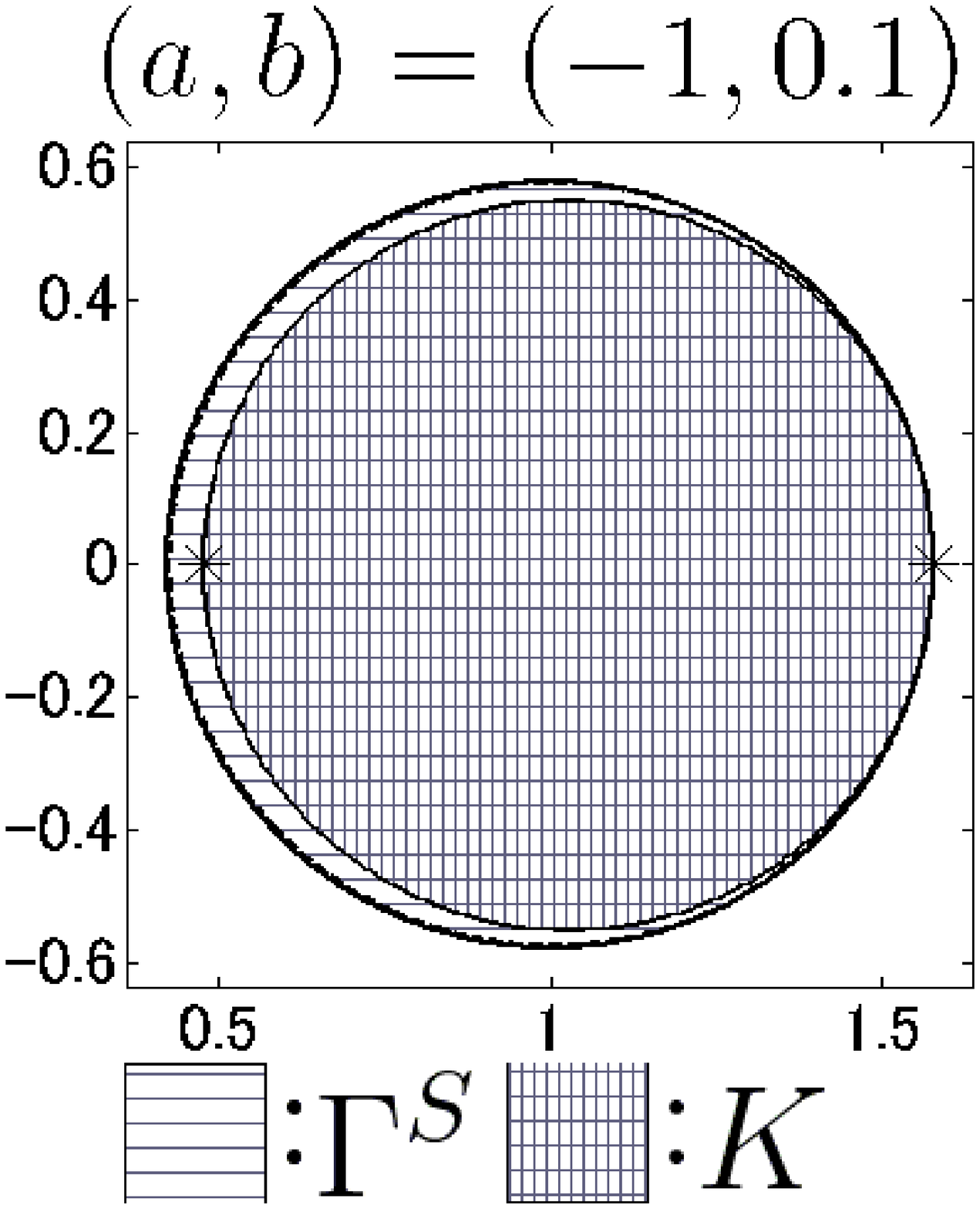}
\end{center}
\end{minipage}
\begin{minipage}{0.33\hsize}
\begin{center}
\includegraphics[width=47mm]{./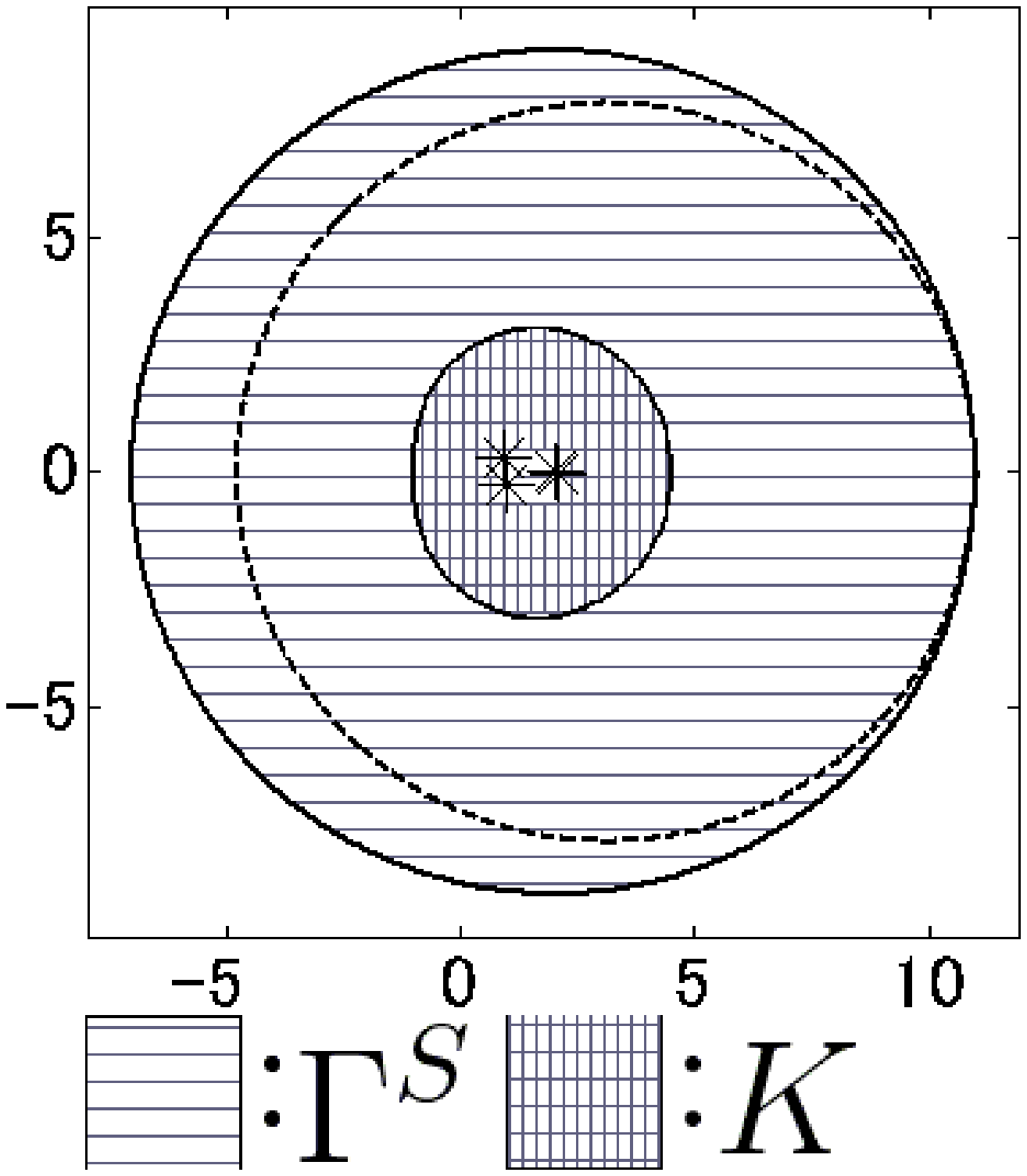}
\end{center}
\end{minipage}
\end{tabular}
\caption{Plots of $\Gamma(A,B)$ and $K(A,B)$}
\label{gerandvar}
\end{figure}

\newpage 
 We emphasize that our result $\Gamma(A,B)$ is defined by circles and so is easy to plot, while the regions $K(A,B)$ and $G(A,B)$ are generally complicated regions, and are difficult to plot. In the above figures  we obtained $K(A,B)$ and $G(A,B)$  by a very naive method, i.e., by dividing the complex plane into small regions and testing whether the center of the region is contained in each set. 
\section{Application to forward error analysis}\label{forerr}
The Gerschgorin theorems presented in section \ref{ggsec} can be used in a straightforward way for a matrix pencil with some diagonal dominance property whenever one wants a simple estimate for the eigenvalues or bounds for  the extremal eigenvalues, as the standard Gerschgorin theorem is used for standard eigenvalue problems. 

 Here we show how our results can also be used to provide  a forward error analysis for computed eigenvalues of a diagonalizable pencil $A-\lambda B\in \mathbb{C}^{n\times n}$. 

 For simplicity we assume only finite eigenvalues exist.  After the  computation of eigenvalues 
  $\tilde \lambda_i\ (1\leq i\leq n)$ and  eigenvectors (both left and right) 
one can normalize the eigenvectors to get $X,Y\in \mathbb{C}^{n\times n}$ such that 
\begin{align*}
Y^HAX(\equiv \widehat A)&=\begin{pmatrix}\tilde \lambda_1&e_{1,2}&\cdots&e_{1,n}\\e_{2,1} &\tilde\lambda_2&\ddots&\vdots\\\vdots &\ddots&\ddots&e_{n-1,n}\\e_{n,1}&\cdots&e_{n,n-1}&\tilde\lambda_n\end{pmatrix}=\mbox{diag}\{\tilde \lambda_1,\cdots,\tilde \lambda_n\}+E,\\
  Y^HBX(\equiv \widehat B)&=\begin{pmatrix} 1&f_{1,2}&\cdots&f_{1,n}\\f_{2,1} &1&\ddots&\vdots\\\vdots &\ddots&\ddots&f_{n-1,n}\\f_{n,1}&\cdots&f_{n,n-1}&1\end{pmatrix}=I+F.\end{align*}
 The  matrices $E$ and $F$ represent the errors, which we expect to be small after a successful computation (note that in practice computing the matrix products $Y^HAX, Y^HBX$ also introduces errors, but here we ignore this effect, to focus on the accuracy of the eigensolver). 
 We denote by  $E_{j}=\sum_{l}|e_{j,l}|$ and $F_{j}=\sum_{l}|f_{j,l}|$ ($1\leq j\leq n$) their absolute $j$th row sums. 
 We assume that $F_j<1$ for all $j$, or equivalently that  $I+F$ is strictly diagonally dominant, so in the following we only consider  $\Gamma^S(A,B)$ in Corollary \ref{simp} and refer to it  as the Gerschgorin disk. 

 We note that the assumption that both eigenvalues and eigenvectors are computed restricts the problem size to moderate $n$. Nonetheless, computation of a full eigendecomposition of a small-sized problem is often necessary   in practice. For example, it is the computational kernel of the Rayleigh-Ritz process in a method for computing several eigenpairs of a large-scale problem \cite[Ch. 5]{baitemplates}. 



\subsection*{Simple bound}
For a particular computed eigenvalue $\tilde \lambda_i$, we are interested in   how close it is to an ``exact'' eigenvalue of the pencil $A-\lambda B$. 
We consider the simple and multiple eigenvalue cases separately. 
\begin{enumerate}
\item When $\tilde \lambda_i$ is a simple eigenvalue. We  define $\delta\equiv \min_{j\not= i}|\tilde\lambda_i-\tilde \lambda_j|>0$. If $E$ and $F$ are small enough, then $\Gamma_i(\widehat A,\widehat B)$ is disjoint from all the other $n-1$ disks. 
Specifically, this is true if   $\delta>\rho_i+\rho_j$ for all $j\not= i$,
 where 
 \begin{equation}
   \label{rhoj}
\rho_i=\frac{|\tilde \lambda_i|F_i+E_i}{1-F_i},\quad \rho_j=\frac{|\tilde \lambda_i|F_j+E_j}{1-F_j}   
 \end{equation}
are  the radii of the $i$th and $j$th Gerschgorin disks in Theorem \ref{gger}, respectively.
If the inequalities are satisfied for all $j\not=i$, then using Theorem \ref{disjoint} we conclude that there exists exactly $1$ eigenvalue $\lambda_i$ of the pencil $\widehat A-\lambda \widehat B$ (which has the same eigenvalues as $A-\lambda B$) such that 
\begin{equation}\label{bound1}
|\lambda_i-\tilde \lambda_i|\leq \rho_i. 
\end{equation}

\item When $\tilde \lambda_i$ is a multiple eigenvalue of multiplicity $k$, so that $\tilde \lambda_i=\tilde\lambda_{i+1}=\cdots =\tilde\lambda_{i+k-1}$. 
It is straightforward to see that a similar argument holds and  if the $k$ disks $\Gamma_{i+l}(\widehat A,\widehat B)$ $(0\leq l\leq k-1)$ are disjoint from the other $n-k$ disks, 
then there exist exactly $k$ eigenvalues $\lambda_j\ (i\leq j\leq i+k-1)$ of the pencil $A-\lambda B$ such that 
 \begin{equation}\label{mul1}
 |\lambda_{j}-\tilde \lambda_{i}|\leq \max_{0\leq l\leq k}\rho_{i+l}.
\end{equation}
\end{enumerate}
\subsection*{Tighter bound}
 Here we derive another bound that can be  much tighter than \eqref{bound1} when the error matrices $E$ and $F$ are small. 
 We use  the technique of diagonal similarity transformations employed in \cite{wilkinson:1965,stewart-sun:1990}, where first-order eigenvalue perturbation results are obtained. 

 We consider the case where $\tilde \lambda_i$ is a simple eigenvalue and denote $\delta\equiv \min_{j\not= i}|\tilde\lambda_i-\tilde \lambda_j|>0$, and suppose that the $i$th Gerschgorin disk of the pencil $\widehat A-\lambda \widehat B$ is disjoint from the others. 

Let $T$ be a diagonal matrix whose $i$th diagonal is $\tau$ and $1$ otherwise. We consider the Gerschgorin disks  $\Gamma_j(T\widehat AT^{-1}, T\widehat BT^{-1})$, and find the smallest  $\tau$ such that the $i$th disk is disjoint from the others. By the assumption, this disjointness holds when $\tau=1$, so we only consider $\tau<1$. 

The center of  $\Gamma_j(T\widehat AT^{-1}, T\widehat BT^{-1})$ is $\tilde \lambda_j$ for all $j$. As for the radii  $\widehat \rho_i$ and   $\widehat \rho_j$, for $\tau<F_i,F_j$ we have 
\[ \widehat \rho_i=\frac{\tau|\tilde \lambda_i| F_i+\tau E_i}{1-\tau F_i}\leq \tau \rho_i,\]
 and
 \[\widehat \rho_j\leq \frac{|\tilde \lambda_j| F_j/\tau+E_{j}/\tau}{1-F_j/\tau},\quad   \mbox{for}\  j\not= i. \]
Since $\tau <1$, we see that writing  $\delta_j=|\tilde \lambda_i-\tilde \lambda_j|$,
\begin{equation}\label{jouken}
\rho_i+\widehat \rho_j< \delta_j
\end{equation}
 is a sufficient condition to for the disks $\Gamma_i(T\widehat AT^{-1}, T\widehat BT^{-1})$ 
and $\Gamma_j(T\widehat AT^{-1}, T\widehat BT^{-1})$  to be  disjoint. \eqref{jouken} is satisfied if 
\begin{align*}
&\rho_i+\frac{|\tilde \lambda_j| F_j/\tau+ E_j/\tau}{1-F_j/\tau}<\delta_j\\
\Leftrightarrow &(\tau-F_j)(\delta_j-\rho_i)>|\tilde \lambda_j| F_j+ E_j\\
\Leftrightarrow &\tau>F_j+\frac{|\tilde \lambda_j| F_j+ E_j}{\delta_j-\rho_i},
\end{align*}
where we used $\delta_j-\rho_i>0$, which follows from the disjointness assumption. 
Here, since $\delta_j\geq \delta>\rho_i$, we see that \eqref{jouken} is true if 
\[\tau>F_j+\frac{|\tilde \lambda_j| F_j+ E_j}{\delta-\rho_i}.\]
Repeating the same argument for all $j\not= i$, we conclude that if 
\begin{equation}
  \label{tau0}
\tau>F_j+\frac{\max_{j\not= i}\{|\tilde \lambda_j| F_j+ E_j\}}{\delta-\rho_i}\ (\equiv \tau_0),
\end{equation}
then the disk $\Gamma_i(T\widehat AT^{-1}, T\widehat BT^{-1})$ is disjoint from the remaining $n-1$ disks. 

Therefore, by letting  $\tau=\tau_0$ and using Theorem \ref{disjoint} for the pencil $T\widehat AT^{-1}-\lambda T\widehat BT^{-1}$,  we conclude that there exists exactly one eigenvalue $\lambda_i$ of the pencil $A-\lambda B$ such that 
\begin{equation}\label{bound2}
|\lambda_i-\tilde \lambda_i|\leq \frac{\tau_0(|\tilde \lambda_i|F_i+E_i)}{1-\tau_0F_i}\leq \tau_0\rho_i.
\end{equation}


 Using $\delta\leq|\tilde \lambda_i|+|\tilde \lambda_j|$, we can bound  $\tau_0$ from above by 
\[\tau_0\leq \frac{\max_{j\not= i}\{(2|\tilde \lambda_j|+|\tilde \lambda_i|) F_j+ E_j\}}{\delta-\rho_i}\leq  \frac{\max_{j\not= i}\{(2|\tilde \lambda_j|+|\tilde \lambda_i|) F_j+ E_j\}}{(1-F_i)(\delta-\rho_i)}.\]
 Also observe from  \eqref{rhoj} that 
\[\rho_i=\frac{|\tilde \lambda_i|F_i+E_i}{1-F_i}\leq  \frac{\max_{1\leq j\leq n}\{(2|\tilde \lambda_j|+|\tilde \lambda_i|) F_j+ E_j\}}{1-F_i}.\]
Therefore, denoting $\delta'=\delta-\rho_i\quad \mbox{and}\quad r=\frac{1}{1-F_i}\max_{1\leq j\leq n}\{(2|\tilde \lambda_j|+|\tilde \lambda_i|)F_j+E_j\}$,
we have  $\tau_0\leq r/\delta'$ and $\rho_i\leq r$. Hence, from \eqref{bound2} we conclude that 
\begin{equation}\label{boundquad}
|\lambda_i-\tilde \lambda_i|\leq \frac{r^2}{\delta'}.
\end{equation}
Since $r$ is essentially the size of the error, and $\delta'$ is essentially the gap between $\tilde\lambda_i$ and any other computed eigenvalue, we note that this bound resembles the quadratic bound for the standard Hermitian eigenvalue problem,  $|\tilde \lambda-\lambda|\leq \|R\|^2/\delta$ \cite[Ch.11]{parlettsym}. 
Our result  \eqref{boundquad} indicates that  this type of quadratic error bound holds also for the non-Hermitian generalized eigenvalue problems.

\providecommand{\bysame}{\leavevmode\hbox to3em{\hrulefill}\thinspace}
\providecommand{\MR}{\relax\ifhmode\unskip\space\fi MR }
\providecommand{\MRhref}[2]{%
  \href{http://www.ams.org/mathscinet-getitem?mr=#1}{#2}
}
\providecommand{\href}[2]{#2}

\end{document}